\documentclass[pdftex]{amsart}
\usepackage[foot]{amsaddr}
\usepackage{latexsym}
\usepackage{amssymb,amsmath}
\usepackage{graphicx, xcolor}
\usepackage{multirow}
\usepackage{esint}
\usepackage{algorithmic}
\usepackage{algorithm}
\usepackage{here}
\usepackage{url}

\theoremstyle{plain}
\newtheorem{thm}{Theorem}[section]

\newtheorem{lem}[thm]{Lemma}

\theoremstyle{definition}

\theoremstyle{remark}
\newtheorem{rem}[thm]{Remark}

\newcommand{\red}[1]{{\color[rgb]{1.0, 0.0, 0.0}{#1}}}
\newcommand{\blue}[1]{{\color[rgb]{0.0,0.0,1.0}{#1}}}

\makeatletter
\def\@seccntformat#1{\csname the#1\endcsname.\quad}
\makeatother

\renewcommand{\theenumi}{\roman{enumi}}

\newcommand{\sgn}{\mathrm{sgn}}

\makeatletter
        
        \@addtoreset{equation}{section}
\makeatother

\makeatletter
        
        \@addtoreset{figure}{section}
\makeatother

\begin{document}

\def\sgn{\mathrm{sgn}}

\title{Diffusive method for multiple class data classification}
\author{T.~Aiki}
\address{Department of Mathematics, Faculty of Science, 
Japan Women's University, Tokyo, Japan}
\email{aikit@fc.jwu.ac.jp}
\author{H.~Yano}
\address{Division of Mathematical and Physical Sciences, Graduate School of Science, 
Japan Women's University, Tokyo, Japan}
\email{m1816099yh@ug.jwu.ac.jp}
\author{T.~Ohtsuka}
\address{
Department of Mathematics, Faculty of Science and Technology, Tokyo University of Science, Chiba, Japan}
\email{tohtsuka@rs.tus.ac.jp}
\subjclass[2020]{Primary~35K05, 68T10, Secondary~35K08, 35Q68}

\begin{abstract}
 A simple algorithm for multiple class data classification 
 is proposed, which is based on the infinit speed of propagation
 and the strong maximum principle for the heat equation.
 In this method, solutions of the heat equation whose initial data
 are the characteristic function of the training dataset
 are considered.
 The classification is established by the sign of
 the difference of two solutions in a very short time interval.
 A numerical algorithm for discrete training data
 in high-dimensional feature space is proposed.
 As an application of the proposed method,
 numerical results on the classification of handwritten digits
 database are presented in this paper.
\end{abstract}

\maketitle

\section{Introduction}

Data classification or pattern recognition 
is one of problems in machine learning.
See a book \cite{Mitchell:1997book} 
for details on the conventional theory of machine learning.
As a simple case, we consider the binary data classification,
which is the problem classify inputs into two categories,
either the datum belongs to a class or not.
From analytical viewpoint, this problem can be
regarded as a problem finding a function 
$f \colon \mathbb{R}^d \supset D \to \{ -1 , 1 \}$
to minimize the recognition error functional 
with given training data
\[
 S = \{ (x_\ell , y_\ell) \in \mathbb{R}^d \times \{ -1, 1 \} 
 \mid \ell = 1, 2, \ldots, N \}.
\]
In 1930s, Fisher \cite{Fisher:1950Book} proposed the theory of 
linear discriminants for this problem;
find a hyperplane
\[
 f(x) = w \cdot x + b
\]
for a vector $w \in \mathbb{R}^d$ and constant $b \in \mathbb{R}$
satisfying $y_\ell f(x_\ell) > 0$ for every $\ell = 1, \ldots, N$.
Rosenblatt \cite{Rosenblatt:Percept}
proposed an iterative method to obtain the linear discreminant
for linearly separable training data.
For improvement minimizing the recognition error, 
the idea of maximal margin classifier was proposed.
The maximal margin classifier by the geometrical margin
can be understood to find the hyperplane maximizing the
distance from both of the data
$\{ (x_\ell, y_\ell) \in S \mid y_\ell = 1 \}$
and 
$\{ (x_\ell, y_\ell) \in S \mid y_\ell = -1 \}$
subject to $y_\ell f(x_\ell) > 0$ for every $\ell = 1, \ldots, N$.
For linearly unseparable data, there are two direction 
to develop the linear discreminants.
One is the soft margin optimization, which extends 
the margin to that includes a few data results classification error.
The other is a kernel method, which extends the class of
discriminants to a space which has a basis of nonlinear functions,
and learn the discreminants by the theory of linear discriminants. 
The support vector machine 
due to Boser, Guyon and Vapnik \cite{Vapnik:SupportVectorMachine}
is one of the powerful options for the data classifications.
We refer to \cite{Vapnik:Book, cristianini_shawe-taylor_2000} 
for details of the theory of optimal separating hyperplane 
and its generalization for nonseparable case, or
the support vector machines and other kernel-based learning methods.
In summary of the above review, 
the key idea of the data classification
is to get a boundary of data class maximizing margin
from the training data.

Recently, 
a diffusive method for binary data classification 
is proposed by \cite{GGOU:2013CPAA},
which is not based on kernel method.
It is established by the diffusive sign
\[
 S_D [u_0] = \lim_{t \searrow 0} \mathrm{sgn} u(x,t)
\]
with a solution $u=u(x,t)$ of the Cauchy problem
\begin{align}
 \left\{
 \begin{array}{ll}
  u_t = \Delta u & \mbox{in} \ \mathbb{R}^d \times (0,\infty), \\
  u(\cdot, 0) = \chi_{A_1} - \chi_{A_2}
  & \mbox{on} \ \mathbb{R}^d
 \end{array}
 \right.
 \label{Cauchy heat}
\end{align}
for given training data $A_1, A_2 \subset \mathbb{R}^d$,
where
\begin{align*}
 \sgn (a) & =
 \left\{
 \begin{array}{ll}
  -1 & \mbox{if} \ a < 0, \\
  0 & \mbox{if} \ a = 0, \\
  1 & \mbox{otherwise},
 \end{array}
 \right. \\
 \chi_A (x)
 & =
 \left\{
 \begin{array}{ll}
  1 & \mbox{if} \ x \in A, \\
  0 & \mbox{otherwise}
 \end{array}
 \right. 
\end{align*}
for $A \subset \mathbb{R}^n$.
The unique bounded solution $u$ is known (see e.g., \cite{Widder:book})
to be represented by the Gaussian kernel $G_t$ of the form
\[
 u(x,t) = \int_{\mathbb{R}^d} G_t (x-y) u_0 (y) dy
 = (G_t * u_0) (x)
\]
with $G_t (x) = (4 \pi t)^{-d/2} \exp (-|x|^2/(4t))$.
The smoothing effect of the heat equation implies that
$u \in C^\infty (\mathbb{R}^d \times (0,\infty))$
(see e.g., \cite{Brezis:book}),
and the strong maximum principle implies that
the zero level set of $u$ for \eqref{Cauchy heat}
becomes analytic variety having locally finite 
$d-1$ dimensional Hausdorff measure
(see \cite{Protter-Weinberger:1967} for the details of
maximum principle).
Hence, the discriminant of this approach 
is proposed by the set where
$S_D [u_0] (x) = 1$ or $S_D [u_0] (x) = -1$.
At a practical stage, the discriminant is obtained as
\[
 P_{A_1}^h
 = \{ x \in \mathbb{R}^d \mid u(x,h) > 0 \}, \quad 
 P_{A_2}^h
 = \{ x \in \mathbb{R}^d \mid u(x,h) < 0 \}
\]
for sufficiently small $h > 0$.
The idea maximizing margin in the diffusive method 
is naturally established by the fundamental property 
of the heat equation.
According to the infinitely propagation speed
of the domain of dependence on the heat equation,
the discreminants by $S_D [u_0] (x) = \pm 1$ 
are characterized only by essential distance 
from the training data,
where ``essential'' means ignoring 
null subsets of training data.
Consequently, this approach leads to the nearest
neighbor classification by the essential distance.

The goal of this paper is to extend the diffusive method
to multiple class classification problem, that is,
find a classifier function 
$f \colon \mathbb{R}^d \to \{ 0, 1 \}^N$.
The key observation of the extention is that
we can divide the propagation of signs 
for the binary data problem into that of positive and
negative parts, i.e., 
\begin{align*}
 & u_t^+ = \Delta u^+, \quad 
 u_t^- = \Delta u^-
 & & \mbox{in} \ \mathbb{R}^d \times (0,T), \\ 
 & u^+ (\cdot,0) = \chi_{A_1}, 
 \quad u^- (\cdot, 0) = \chi_{A_2} 
 & & \mbox{on} \ \mathbb{R}^d.
\end{align*}
Clearly, the solution $u$ of \eqref{Cauchy heat}
is given as $u (x,t) = u^+ (x,t) - u^- (x,t)$, and thus
the diffusive method for binary data class
is rephrased by the diffusive sign of $u^+ (x,t) - u^- (x,t)$.
We now generalize this idea to the multiple class 
data classification;
let us consider the system of the heat equations
\begin{align}
 & \partial_t u_i = \Delta u_i & & \mbox{in} \ \mathbb{R}^d \times (0,T), 
 \label{intro: i-th eq} \\
 & u_i (\cdot, 0) = u_{0,i} := \chi_{A_i} & & \mbox{on} \ \mathbb{R}^d
 \label{intro: i-th init}
\end{align}
for a given training dataset $S = \bigcup_{i=1}^N A_i$,
where $A_i$ denotes the set of the data labeled by $i \in \{ 1, \ldots, N \}$.
Accordingly, the diffusive method for multiple class data classification 
can be established as
\[
 S_i = \{ x \in \mathbb{R}^d \mid S_D [u_{0,i} - u_{0,k}] (x) = 1 \ \mbox{for every} \
 k \neq i \}.
\]
As an application of the diffusive method for multiple class 
data classification,
we demonstrate our approach for the classification of the handwritten digits 
database provided by MNIST \cite{MNIST_Url,MNIST}.
Note that, at the practical stage,
datasets are often given by \emph{discrete} training data
in a \emph{high-deminsional feature space}.
In fact, the MNIST database of handwritten digits consist of
60,000 training and 10,000 test data,
which are given by $28 \times 28$-dimensional vectors 
(i.e., $d = 28 \times 28$).
To address such a discrete training data, 
we use the sum of heat kernels
centered at the training data.
For each $i \in \{ 1, \ldots, N \}$, we define
\begin{equation}
 \Psi_i (x,t) = \frac{1}{(4 \pi t)^{\frac{d}{2}}}
 \sum_{k=1}^{K_i} \exp \left( - \frac{|x-p_{k,i}|^2}{4t} \right).
 \label{intro: discreminants for discrete data1}
\end{equation}
Due to this notation, the classification of an input 
$x \in \mathbb{R}^d$ should be established by the diffusive sign 
of $\Phi_i (x,t) - \Phi_j (x,t)$, that is,
whether
\[
 \lim_{t \to 0} (\Psi_i (x,t) - \Psi_j (x,t))
\]
is positive or not for $j \neq i$.
When we compute $\Psi_i (x,t)$ for inputs 
$x \in \mathbb{R}^d \setminus S$ at a practical level,
numerical overflow or underflow often occurs since
$(4 \pi t)^{- d/2} \to \infty$ and
$\exp (- |x-p_{k,i}|^2 / (4t)) \to 0$ as $t \to +0$
although $(4 \pi t)^{- d/2} \exp (- |x-p_{k,i}|^2 / (4t)) \to 0$
as $t \to +0$
for every $p_{k,i} \in A_i$ ($k \in 1, \ldots, K_i$, $i=1, \ldots, N$).
To address this issue, 
we propose an algorithm in \S \ref{sec: 3.2}
finding a pair $i$ and a time $t_*$ such that
\begin{align*}
 & t_* = \inf \{ t_0 > 0 \mid \max_{1 \le \ell \le N} \Psi_\ell (x,t) < \varepsilon_0 
 \quad \mbox{for} \ t \in (0,t_0) \}, \\ 
 & (4 \pi t)^{\frac{d}{2}} \Psi_i (x,t_*)
 = \max_{1 \le \ell \le N} (4 \pi t)^{\frac{d}{2}} \Psi_\ell (x,t_*)
\end{align*}
for a sufficiently small $\varepsilon_0$ like as the machine epsilon.
Consequently, 
this idea also leads to the nearest neighbor classification
for multiple class training data,
or the Voronoi diagram if every $A_i$ is a singleton.

A simple approach to establish the nearest neighbor classification
is to compute the distance function from each dataset.
There are some algorithms: PDE approach \cite{Tsai_2002JCP},
fast sweeping method \cite{Zhao_2005MathComp}.
Our approach investigates which label of training data is 
nearest by using the solutions of the heat equations
instead of computing the distance functions.
Important feature of our approach is infinite speed of 
the propagation for the heat equation, which establishes
faster computation than computing distance functions
or the eikonal equation.
On the other hand, there are some methods using Gaussian
kernel in machine learning: kernel based learning with Gaussian
kernel, or linear classifier with radial basis functions
(see \cite{Vapnik:Book} for details of these methods).
However, the usage of the heat kernel in our approach
is different from them.

In this paper, we first present some mathematical analysis
on the diffusive sign in \S \ref{sec: mathematical framework}.
Note that considering \eqref{intro: i-th eq}--\eqref{intro: i-th init}
enables us to consider the situation such that
$u_i$ satisfies different diffusion equations from each other.
In this paper, we generalize the results of the diffusive sign
in \cite{GGOU:2013CPAA} for the situation such that
each heat equation of $u_i$ has different diffusion coefficient.
Moreover, we consider the property of $\lim_{t \to 0} \sgn (\Psi_i (x,t) - \Psi_j (x,t))$
for discrete dataset $A_i$ in \S \ref{sec: for discrete data}.
We next propose an algorithm of multiple class data classification,
in particular for discrete datasets in high-dimensional feature space,
in \S \ref{sec: algorithm}.
In \S \ref{numerical_results}, we present some numerical results
on the multiple class data classification,
in particular for the handwritten digits database by MNIST.

\section{Mathematical framework}
\label{sec: mathematical framework}

\subsection{For continuum data}

Consider the situation such that
training data are given as measurable sets 
$A_i \subset \mathbb{R}^d$ with $|A_i| \neq 0$
for $i=1, 2, \ldots, N$ and
$\bigcup_{i=1}^N \overline{A_i} \subsetneq \mathbb{R}^d$,
where $|A|$ denotes the Lebesgue measure 
of $A \subset \mathbb{R}^d$.
The goal of this section is to establish a diffusive method 
of $N$-class classification for continuum training data,
i.e., construct the classifier sets $P_{A_i}$
such that 
\begin{itemize}
 \item $\bigcup_{i=1}^N \overline{P_{A_i}} = \mathbb{R}^d$
       and $\bigcap_{i=1}^N P_{A_i} = \emptyset$,
 \item 
       $P_{A_i}$ indicates a class containing $A_i$.
\end{itemize}
The algorithm is as follows. 

\par
\bigskip
\noindent
\textbf{Diffusive method for $N$-class data classification (for continuum data).}
\begin{enumerate}
 \item[{\bf 1.}] Set 
	      \begin{equation}
	       \label{initial data}
	       u_{0,i} = c_i \chi_{A_i} (x)
	       = \left\{
	       \begin{array}{ll}
		c_i & \mbox{if} \ x \in A_i, \\
		0 & \mbox{otherwise}
	       \end{array}
	       \right.	       
	      \end{equation}
	      with a constant $c_i > 0$ for $i=1, \ldots, N$.
	      (Note that magnitude of $c_i$ plays no role
	      in this algorithm.
	      See Lemma \ref{characterization} for details.)
 \item[{\bf 2.}] Solve the Cauchy problem: 
	      \begin{align*}
	       & \partial_t u_i = a_i \Delta u_i
	       & & \mbox{in} \ \mathbb{R}^d \times (0,T), \\
	       & u_i (\cdot, 0) = u_{0,i}
	       & & \mbox{in} \ \mathbb{R}^d
	      \end{align*}
	      for $i=1, \ldots, N$, 
	      where $a_i > 0$ ($i=1, \ldots, N$) are constants.
 \item[{\bf 3.}] Set 
	      \[
	       P_{A_i} = \{ x \in \mathbb{R}^d \mid
	       S_D [u_{0,i} - u_{0,j}] (x) = 1 \ \mbox{for every} \ 
	       j \neq i \}
	      \]
	      for $i=1, \ldots, N$,
	      where
	      \[
	       S_D [u_{0,i} - u_{0,j}] (x) 
	       = \lim_{t \downarrow 0} \mathrm{sgn} (u_i (x,t) - u_j (x,t)).
	      \]
	      At a practical stage, fix a small parameter $0 < h \ll 1$,
	      and set
	      \[
	       P^h_{A_i} = \{ x \in \mathbb{R}^d \mid 
	       u_i (x,h) - u_j (x, h) > 0 \ \mbox{for every} \ 
	       j \neq i \}
	      \]
	      for $i=1, \ldots, N$.
\end{enumerate}

\par
\bigskip
\noindent
This algorithm proposes a kind of maximal (geometical) margin classifier.
We here present mathematical analysis on this algorithm.

First, we present a sufficient condition for initial data so that
$S_D [u_{0,j} - u_{0,k}] (x)$ is well-defined.
To clarify the situation, we now reduce the situation into 
a pair of solutions $u$ and $v$ for the heat equation.
Consider a pair of Cauchy problems
\begin{align}
 & u_t = a \Delta u, 
 & \mbox{in} & \ \mathbb{R}^d \times (0,T), 
 \label{positive part eq} \\
 & 
 v_t = b \Delta v
 & \mbox{in} & \ \mathbb{R}^d \times (0,T), 
 \label{negative part eq} \\
 & u (\cdot, 0) = u_0, \quad
 v (\cdot, 0) = v_0
 & \mbox{on} & \ \mathbb{R}^d,
 \label{pos-neg initial data}
\end{align}
where $a$ and $b$ are positive constants,
and the inital data $u_0, v_0 \in L^\infty (\mathbb{R}^d)$ 
are nonnegative functions.
For the solutions $u$ and $v$, we present a sufficient condition such that
\[
 S_D [u_0 - v_0] (\hat{x}) = \lim_{t \downarrow 0} \mathrm{sgn} (u(\hat{x},t) - v(\hat{x},t))
\]
is well-defined.
Note that $\lim_{t \downarrow 0} u(\hat{x},t) = u_0 (\hat{x})$ 
and $\lim_{t \downarrow 0} v(\hat{x},t)= v_0 (\hat{x})$ 
if $u_0$ and $v_0$ are continuous at $\hat{x}$.
Thus, we here consider the case when $u_0 (\hat{x}) = v_0 (\hat{x})$.

We now introduce a sign changing number of $f \colon \mathbb{R} \to \mathbb{R}$.
For an interval $I \subset \mathbb{R}$,
define 
\[
 Z_I [f] = \sup \left\{ m \ge 0 \; \middle| \; 
 \begin{aligned}
  & \mbox{there exists} \ \xi_0 < \xi_1 < \xi_2 < \cdots < \xi_m \ \mbox{such that} \\
  & \{ \xi_i \}_{i=0}^m \subset I, \
  \mbox{and} \ f(\xi_{i-1}) f(\xi_i) < 0 \ 
  \mbox{for} \ i=1, \ldots, m
 \end{aligned}
 \right\},
\]
and $Z [f] = Z_{\mathbb{R}} [f]$.
We say $Z[f]$ is \emph{locally finite} if $Z_I [f]$ is finite 
for all bounded open interval $I$.

To state the sufficient condition for the diffusive sign,
we also introduce a radial average of a function on $\mathbb{R}^d$
with respect to $\hat{x} \in \mathbb{R}^d$.
For a measurable function $w \colon \mathbb{R}^d \to \mathbb{R}$,
define
\begin{equation}
 \bar{w} (r;\hat{x}) 
 = 
 \left\{
 \begin{array}{ll}
  {\displaystyle \frac{w(\hat{x} + r) + w (\hat{x} - r)}{2}}
  & \mbox{if} \ d=1, \\ 
  {\displaystyle - \!\!\!\!\!\! \int_{|\omega|=1} w (\hat{x} + |r|\omega) 
   \textup{d} \mathcal{H}^{d-1} (\omega)}
   & \mbox{if} \ d \ge 2.
 \end{array}
 \right.
 \label{radial ave}
\end{equation}
The following Lemma gives a sufficient condition so that
$S_D [u_0-v_0] (\hat{x})$ is well-defined when $u_0 (\hat{x}) = v_0 (\hat{x})$.

\begin{lem}
 \label{suff condi}
 Let $u_0, v_0 \colon \mathbb{R}^d \to \mathbb{R}$ be nonnegative,
 bounded and measurable functions.
 Let $\hat{x} \in \mathbb{R}^d$ be such that
 $u_0$ and $v_0$ are continuous at $\hat{x}$, 
 and $u_0 (\hat{x}) = v_0 (\hat{x})$. Moreover, assume that
 \begin{enumerate}  
  \renewcommand{\theenumi}{A\arabic{enumi}}
  \item \label{num of disconti. of init}
	$\bar{u}_0 (r; \hat{x}), \bar{v}_0 (r; \hat{x})$ are piecewise continuous 
	(with possibly countably many discontinuities
	having at most finitely many accumulation points),
  \item \label{left-right conti. of init}
	$\bar{u}_0 (r; \hat{x}), \bar{v}_0 (r; \hat{x})$
	are either left or right continuous
	at their discontinuities.
 \end{enumerate}
 Set 
 \begin{align}
  \bar{w}_0 (r; \hat{x}) 
  & = \bar{u}_0 (r; \hat{x}) - \bar{v}_0 (kr; \hat{x}) 
  \label{radial average w} 
 \end{align}
 with $k = \sqrt{b/a}$.
 If the numbers of changes of sign 
 $Z[\bar{w}_0]$ is locally finite,
 then $S_D [u_0 - v_0] (\hat{x})$ is well-defined.
 In other words, there exists $\hat{t} > 0$ such that
 $u(\hat{x},t) - v(\hat{x},t)$ has the same sign
 for $0 < t < \hat{t}$.
 Moreover, $S_D [u_0 - v_0] (\hat{x}) \equiv 0$ if and only if
 $\bar{w}_0 (\cdot; \hat{x}) \equiv 0$.
 Thus, the set $\{ x \in \mathbb{R}^d \mid S_D [u_0 - v_0] (x) = 0 \}$
 is discrete when $d=1$, and is included in an analytic variety 
 when $d \ge 2$.
\end{lem}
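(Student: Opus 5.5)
Write $u(\hat x,t)$ and $v(\hat x,t)$ in polar coordinates around $\hat x$. For $d\ge 2$ the heat kernel is radial, so
\[
 u(\hat x,t)=(4\pi a t)^{-d/2}|S^{d-1}|\int_0^\infty e^{-r^2/(4at)}\bar u_0(r;\hat x)r^{d-1}\,dr ,
\]
and $d=1$ works the same way with the symmetric average. Apply the same formula to $v$ with $b$ in place of $a$, and substitute $r=k\rho$ with $k=\sqrt{b/a}$. This gives $e^{-k^2\rho^2/(4bt)}=e^{-\rho^2/(4at)}$ and $(4\pi bt)^{-d/2}k^d=(4\pi at)^{-d/2}$, hence
\[
 u(\hat x,t)-v(\hat x,t)=C_d(4\pi a t)^{-d/2}\int_0^\infty e^{-r^2/(4at)}\bar w_0(r;\hat x)r^{d-1}\,dr .
\]
The sign of $u-v$ at $\hat x$ is therefore the sign of a Laplace-type transform $F(s)=\int_0^\infty e^{-sr^2}\bar w_0(r)r^{d-1}dr$ with $s=1/(4at)\to\infty$. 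By continuity at $\hat x$ and $u_0(\hat x)=v_0(\hat x)$, we have $\bar w_0(r)\to 0$ as $r\to 0$.

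The core step is to show that $F(s)$ has a constant sign for all large $s$. Since $Z[\bar w_0]$ is locally finite, and since (A1) and (A2) make the sign well defined on intervals, there is a first point $r_0\ge 0$ beyond which $\bar w_0$ is not a.e.\ zero. There is also $r_1>r_0$ such that $\bar w_0$ has a fixed sign $\sigma$ a.e.\ on $(r_0,r_1)$ and is not identically zero there. The case $r_0=0$ needs care, because sign changes could accumulate at $0$; local finiteness on a bounded interval containing $0$ rules this out. Split
\[
 F(s)=\int_{r_0}^{r_2}+\int_{r_2}^\infty
\]
with $r_0<r_2<r_1$ chosen so that $\int_{r_0}^{r_2}|\bar w_0|r^{d-1}\,dr$ is still positive, on a subset $[r_0,r_3]$ with $r_3<r_2$. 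Then the first piece has sign $\sigma$ and is bounded below by $c\,e^{-sr_3^2}$ for some $c>0$. Boundedness gives the tail bound $|\int_{r_1}^\infty|\le C s^{-d/2}e^{-sr_1^2}$ up to polynomial factors. The middle part $(r_2,r_1)$ also has sign $\sigma$, so it only helps. Hence $\sigma F(s)>0$ for $s$ large, which gives $\hat t$. The main obstacle is this lower bound versus the tail estimate, in particular making the choice of $r_3<r_1$ rigorous. If $\bar w_0$ is not locally integrable-positive near $r_0$ one would need finer control, but a positive integral on $[r_0,r_3]$ with $r_3<r_1$ suffices, since $e^{-sr^2}\ge e^{-sr_3^2}$ there.

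For the equivalence: if $\bar w_0\equiv 0$ then $F\equiv 0$, so $S_D=0$. Conversely, if $\bar w_0\not\equiv 0$ (a.e.), the argument above produces $r_0$ and a strict sign, so $S_D=\pm1$. Alternatively one can invoke uniqueness of the Laplace transform in $r^2$.

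For the final statement I would treat the case $a=b$ (the same $k$ works in general). The set $\{x : S_D[u_0-v_0](x)=0\}$ is contained in $\{x : u(x,t)=v(x,t)\ \forall t\in(0,\hat t)\}$. Each function $x\mapsto u(x,t)-v(x,t)$ is real-analytic for $t>0$, by analyticity of the Gaussian convolution. The set is therefore contained in the zero set of the nonzero analytic function $x\mapsto u(x,t_0)-v(x,t_0)$, which is an analytic variety. It is nonzero because $u_0\neq v_0$ on a positive-measure set and the heat semigroup is injective. In $d=1$ an analytic variety is discrete.
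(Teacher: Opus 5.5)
Your argument is correct, but it extracts the sign by a different mechanism than the paper does.

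\textbf{Comparison with the paper.} The paper's Step~1 shows that $\bar w(x,t)=\bar u(|x|,t)-\bar v(k|x|,t)$ solves $\partial_t\bar w=a\Delta\bar w$ with data $\bar w_0$. Step~2 uses (A1), (A2) and local finiteness of $Z$ to find a continuity point $\gamma_0$ with $\bar w_0\ge0$ on $[0,\gamma_0]$ and $\bar w_0(\gamma_0)>0$. Step~3 gets $\bar w>0$ on $\{|x|=\gamma_0\}$ for small $t$ and concludes $\bar w(0,t)>0$ by the strong maximum principle. Your $F$ is exactly $\bar w(0,t)$ written through the Gaussian kernel; the substitution $r=k\rho$ plays the role of Step~1. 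You then read off the sign by a ``first sign wins'' Laplace estimate, the same device the paper uses for Lemma~\ref{characterization}.

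The step you call the main obstacle is in fact immediate. Set $r_0=\sup\{r:\bar w_0=0 \ \text{a.e. on}\ (0,r)\}$. Local finiteness of $Z$, via a descending alternating sequence as in Step~2, gives $r_1>r_0$ with $\sigma\bar w_0\ge0$ on $(r_0,r_1)$. Any $r_3\in(r_0,r_1)$ then has $c=\int_{r_0}^{r_3}|\bar w_0|r^{d-1}\,dr>0$ by the definition of $r_0$, and $\sigma F(s)\ge c\,e^{-sr_3^2}-C\,e^{-sr_1^2}$ for $s\ge1$.

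What your route buys:
\begin{itemize}
\item It uses only boundedness of $\bar w_0$ and a one-signed right neighbourhood of $r_0$. So (A1), (A2), continuity at $\hat x$, and a continuity point $\gamma_0$ with $\bar w_0(\gamma_0)\neq0$ are never needed.
\item It avoids applying the maximum principle on $B_{\gamma_0}\times(0,\hat t)$ with merely piecewise continuous initial data, whose behaviour at $t=0$ the paper leaves implicit.
\item It gives the ``if and only if'' at once. It also gives the stronger fact that $S_D[u_0-v_0](\hat x)=0$ forces $u(\hat x,t)=v(\hat x,t)$ for every $t>0$.
\end{itemize}
The paper's route is more qualitative, relying on comparison rather than explicit kernel asymptotics; this is the structure inherited from \cite{GGOU:2013CPAA}.

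\textbf{The final assertion.} The paper never proves the last sentence of the lemma. Your sketch for it needs three small repairs.
\begin{itemize}
\item \emph{No uniform interval.} $\hat t$ depends on the point, so there is no uniform interval $(0,\hat t)$. Use instead that $S_D[u_0-v_0](x)=0$ implies $u(x,t)=v(x,t)$ for all $t>0$. This follows from your formula, or from real analyticity of $t\mapsto u(x,t)-v(x,t)$ on $(0,\infty)$. The set then lies in the zero set of $u(\cdot,t_0)-v(\cdot,t_0)$ for every $t_0>0$.
\item \emph{The case $a\neq b$.} Injectivity of a single heat semigroup does not show this function is nonzero, and it can vanish identically at one time. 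For example, take $b>a$ and let $u_0$ be the solution of $w_t=\Delta w$ at time $(b-a)t_0$ starting from $v_0$; then $u(\cdot,t_0)=v(\cdot,t_0)$. Instead, choose $t_0$ with $u(\cdot,t_0)\not\equiv v(\cdot,t_0)$. Such a $t_0$ exists, since otherwise letting $t\downarrow0$ gives $u_0=v_0$ a.e.
\item \emph{A missing hypothesis.} The claim requires $u_0\neq v_0$ on a set of positive measure, which the statement omits. For $u_0=v_0$ and $a=b$ one has $S_D\equiv0$ everywhere. You are right to assume it.
\end{itemize}
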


If $a=b$, then our result reduces to \cite[Theorems 2.1, 2.4]{GGOU:2013CPAA}
with rescaling parameters.
Although our proof is similar to that of \cite{GGOU:2013CPAA},
we give a proof for the completeness
and for the generalization of diffusion coefficients $a$, $b$.


\begin{proof}
 In the following, we omit the notation of $\hat{x}$
 in $\bar{w}_0$, $\bar{u}_0$ and $\bar{v}_0$ for the simplicity.

 \medskip
 \noindent
 \textbf{1.}
 Set 
 \[
  \bar{w} (x,t) = \bar{u} (|x|,t) - \bar{v} (k|x|, t)
 \]
 with $k = \sqrt{b/a}$. Then, 
 $\bar{w}$ is a radial solution to 
 \begin{align}
  \label{spherecal eq-w}
  & \partial_t \bar{w} = a \Delta \bar{w}
  & & \mbox{in} \ \mathbb{R}^d \times (0,T), \\
  \label{spherecal init-w}
  & \bar{w} (\cdot,0) = \bar{w}_0 
  & & \mbox{on} \ \mathbb{R}^d.
 \end{align}
 Since the case $d=1$ and \eqref{spherecal init-w} are clear, 
 we now verify \eqref{spherecal eq-w} when $d \ge 2$. 
 We calculate $\partial_r \bar{u}$ and obtain
 \begin{align*}
  \partial_r \bar{u} (r,t) 
  & = 
  - \!\!\!\!\!\! \int_{|\omega|=1} 
  \nabla u (\hat{x} + r \omega, t) \cdot \omega 
  \textup{d} \mathcal{H}^{d-1} (\omega) \\
  & = 
  - \!\!\!\!\!\! \int_{|y-\hat{x}|=r} 
  \frac{\partial u}{\partial \nu} (y, t) 
  \textup{d} \mathcal{H}^{d-1} (\omega) \\
  & = 
  \frac{r}{d}
  - \!\!\!\!\!\! \int_{|y-\hat{x}| \le r} 
  \Delta u (y, t) \textup{d} y \\
  & = 
  \frac{1}{r^{d-1} d \alpha (d)}
  \int_{|y-\hat{x}| \le r} 
  \frac{u_t (y, t)}{a} \textup{d} y
 \end{align*}
 by Green's formula, where $\nu$ is the
 outer unit normal vector field of $\partial \{ y \in \mathbb{R}^d \mid |y-\hat{x}| < r \}$,
 and $\alpha (d)$ is the surface area of the unit sphere in $\mathbb{R}^d$.
 Thus, 
 \begin{align*}
  \partial_r (r^{d-1} \partial_r \bar{u})
  & = 
  \frac{1}{d \alpha (d)}
  \int_{|y-\hat{x}| = r} 
  \frac{u_t (y, t)}{a} \textup{d} \mathcal{H}^{d-1} (y) \\
  & = \frac{r^{d-1}}{a} 
  - \!\!\!\!\!\! \int_{|\omega|=1} 
  u_t (\hat{x} + r \omega, t) \textup{d} \mathcal{H}^{d-1} (\omega)
  = \frac{r^{d-1}}{a} \bar{u}_t.  
 \end{align*}
 Hence, we obtain
 \begin{equation*}
  \bar{u}_t = a ( \bar{u}_{rr} + \frac{d-1}{r} \bar{u}_{r} )
  = a \Delta \bar{u}  
 \end{equation*}
 since $\bar{u}$ is radially symmetric.
 Similarly, for $\bar{v}_k (r,t) := \bar{v} (kr, t)$, we have
 \[
  \partial_r \bar{v}_k (r,t) = \frac{k}{r^{d-1} d \alpha (d)} 
  \int_{|y-\hat{x}| \le kr } \frac{v_t (y, t)}{b} \textup{d}y,
 \]
 and so
 \[
  \partial_r (r^{d-1} \partial_r \bar{v}_k (r,t)) 
  = \frac{k^2}{d \alpha (d)} 
  \int_{|y-\hat{x}| = kr } \frac{v_t (y, t)}{b} 
  \textup{d} \mathcal{H}^{d-1} (y)
  = \frac{r^{d-1}}{a} \partial_t \bar{v}_k (r, t).
 \]
 Hence, we obtain
 \begin{equation*}
  \partial_t \bar{v} (kr, t) = a ( \partial_r^2 \bar{v} (kr, t) 
  + \frac{d-1}{r} \partial_r \bar{v} (kr,t))
  = a \Delta \bar{v} (kr,t).
 \end{equation*}
 The above yields \eqref{spherecal eq-w}.


 \medskip
 \noindent
 \textbf{2.}
 Note that $\lim_{r \downarrow 0} \bar{w} (r,t) = u(\hat{x},t) - v(\hat{x},t)$,
 so it suffices to investigate the sign of $\bar{w} (0,t)$.
 Since $\bar{w}_0 \equiv 0$ implies $\bar{w} \equiv 0$, 
 we now assume that $\bar{w}_0 \not\equiv 0$.

 By \eqref{num of disconti. of init}
 and \eqref{left-right conti. of init},
 $\bar{w}_0$ also satisfies them.
 By modifying the value of $\bar{w}_0$ at its discontinuities,
 we may assume that $r \mapsto \bar{w}_0 (r)$ is
 left continuous in $(0, \infty)$
 without affecting the value of $\bar{w}$ for $t > 0$.
 In summary, we now have
 \begin{enumerate}
  \renewcommand{\theenumi}{$w$\arabic{enumi}}  
  \item \label{w_0: density of discontinuity}
	$\bar{w}_0$ is piecewise continuous 
	(with possibly countably many discontinuities
	having at most finitely accumlation points),
  \item \label{w_0: left continuity}
	$\bar{w}_0$ is left continuous in $(0,\infty)$,
  \item $\bar{w}_0$ is continuous at $r=0$ and $\bar{w}_0 (0) = 0$,
  \item \label{w_0: sign changing number}
	$Z[\bar{w}_0]$ is locally finite.
 \end{enumerate}
 Immediately, 
 there exists $\gamma_0 > 0$
 such that
 \begin{enumerate}
  \item \label{w_0, continuity on edge}
	$\bar{w}_0$ is continuous in a neighborhood
	of $r = \gamma_0$,
  \item \label{w_0, nonnegativity}
	$\bar{w}_0 \ge 0$ on $[0, \gamma_0]$
	and $\bar{w}_0 (\gamma_0) > 0$, 
	or 
	$\bar{w}_0 \le 0$ on $[0, \gamma_0]$
	and $\bar{w}_0 (\gamma_0) < 0$.
 \end{enumerate}
 We first observe that
 there exists $r_k > 0$ such that \eqref{w_0, nonnegativity} of the above
 holds on $[0,r_k]$.
 In fact, 
 there exists $r_0 > 0$ such that $\bar{w}_0 (r_0) \neq 0$
 by $\bar{w}_0 \not\equiv 0$.
 We now consider the case when $\bar{w}_0 (r_0) > 0$ 
 since the other case is parallel.
 If $\bar{w_0} (r) \ge 0$ for $r \in [0,r_0]$,
 then $r_k = r_0$ is our desired one.
 Otherwise, there exists $r_1 < r_0$ such that $\bar{w}_0 (r_1) < 0$.
 If $\bar{w}_0 \le 0$ on $[0,r_1]$, then $r_k = r_1$ is our desired one.
 Otherwise, there exists $r_2 < r_1$ such that $\bar{w}_0 (r_2) > 0$.
 Inductively, there exists $r_j$ satisfying
 \begin{itemize}
  \item $0 < r_k < r_{k-1} < \cdots < r_0$, and 
  \item $\bar{w}_0 (r_{j-1}) \bar{w}_0 (r_j) < 0$
	for $j = 1, \ldots, k$.
 \end{itemize}
 Note that the set $\{ r_j \}$ is exactly 
 finite by \eqref{w_0: sign changing number}.

 We may assume that $\bar{w}_0 (r_k) > 0$.
 In this case, there exists $\delta > 0$ such that 
 $\bar{w}_0 (r) > 0$ if $r \in (r_k - \delta, r_k]$
 by \eqref{w_0: left continuity}.
 Let $D \subset (r_k - \delta, r_k]$ be a set 
 on which $\bar{w}_0$ is continuous.
 Then, there exists $\gamma_0 \in D$ such that
 $\gamma_0$ is not an accumulation point of
 discontinuities by \eqref{w_0: density of discontinuity},
 which is our desired constant.

 \medskip
 \noindent
 \textbf{3.}
 We may assume that $\bar{w}_0 \ge 0$ in $[0,\gamma_0]$ and
 $\bar{w}_0 (\gamma_0) > 0$
 since the proof of the other case is parallel.
 Under this assumption, we demonstrate that 
 there exists $\hat{t} > 0$ such that 
 $\bar{w} (0,t) > 0$ for $t \in (0,\hat{t})$,
 and thus $S_D [u_0 - v_0] (\hat{x}) = 1$.

 Recall that $\bar{w} (|x|,t)$ is a radial solution to
 \eqref{spherecal eq-w}--\eqref{spherecal init-w},
 and $\bar{w}_0 (|x|)$ is continous on $\{ |x| = \gamma_0 \}$
 by the discussion of the previous section.
 Then, we observe that $\lim_{t \downarrow 0} \bar{w} (|x|,t) = \bar{w}_0 (|x|)$
 for every $x \in \{ |x| = \gamma_0 \}$
 (see \cite[\S 2.3.1]{Evans:PDE}),
 which and compactness of $\{ |x| = \gamma_0 \}$ imply that
 there exists $\hat{t} > 0$ satisfying
 $\bar{w} (|\cdot|, t) > 0$ on $\{ |x| = \gamma_0 \}$.
 Thus, the strong maximum principle \cite{Protter-Weinberger:1967} implies that
 $\bar{w} (0,t) > 0$ for $t \in (0,\hat{t})$. 
\end{proof}

We next characterize the sets 
\[
 S_u = \{ x \in \mathbb{R}^d \mid S_D [u_0 - v_0] (x) = 1 \},
 \quad 
 S_v = \{ x \in \mathbb{R}^d \mid S_D [u_0 - v_0] (x) = -1 \}.
\]
We now introduce an essential distance between 
a point $x \in \mathbb{R}^d$ to a set $A \subset \mathbb{R}^d$
as in \cite{GGOU:2013CPAA}.
Define
\[
 d_e (x,A) = \sup \{ r \ge 0 \mid |B_r (x) \cap A| = 0 \},
\]
where $B_r (x) = \{ y \in \mathbb{R}^d \mid |y-x| < r \}$.

\begin{lem}
 \label{characterization}
 Let $A, B \subset \mathbb{R}^d$ be disjoint measurable sets,
 or more weakly $|A \cap B| = 0$.
 Let $u_0, v_0 \in L^\infty (\mathbb{R}^d)$ be nonnegative functions 
 satisfying 
 \begin{enumerate}
  \item $\mathrm{ess} \inf_A u_0 > 0$, 
	and $\mathrm{ess} \inf_B v_0 > 0$,
  \item $u_0 = 0$ on $\mathbb{R}^d \setminus A$,
	and $v_0 = 0$ on $\mathbb{R}^d \setminus B$.
 \end{enumerate}
 Then, 
 \begin{align*}
  \mathrm{int} S_u & \supset \{ x \in \mathbb{R}^d \mid d_e (x,A) < {\frac{1}{k}} d_e (x, B) \}, \\
  \mathrm{int} S_v & \supset \{ x \in \mathbb{R}^d \mid {\frac{1}{k}} d_e (x,B) < d_e (x, A) \},  
 \end{align*}
 where $k = \sqrt{b/a}$.
\end{lem}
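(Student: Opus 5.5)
The plan is to argue directly with the Gaussian representations of $u$ and $v$. I will compare a lower bound for $u$ with an upper bound for $v$, uniformly for $y$ in a small ball around a given point $x$. I only treat the first inclusion, since the second follows by exchanging the roles of $(u,a,A)$ and $(v,b,B)$, which replaces $k$ by $1/k$.

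\textbf{Setup.} Write
\[
u(y,t)=\int G^a_t(y-z)u_0(z)\,dz, \qquad G^a_t(z)=(4\pi a t)^{-d/2}e^{-|z|^2/(4at)},
\]
and similarly for $v$ with $G^b_t$. Fix $x$ with $\alpha:=d_e(x,A)<\beta/k$, where $\beta:=d_e(x,B)$.

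\textbf{Case $\beta=\infty$.} Then $v_0=0$ a.e., so $v\equiv 0$. Also $|A|>0$, because $\alpha<\infty$. Hence $u>0$ everywhere for $t>0$, and $x\in\operatorname{int} S_u$ trivially. From now on assume $\beta<\infty$.

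\textbf{Choice of parameters.} Since $k=\sqrt{b/a}$, the hypothesis $\alpha<\beta/k$ is equivalent to $\alpha^2/a<\beta^2/b$. Choose small $\varepsilon,\delta,\eta>0$ such that
\[
\frac{(\alpha+\varepsilon+\delta)^2}{a}<(1-\eta)\frac{(\beta-\delta)^2}{b}.
\]
This is possible by strict inequality and continuity.

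\textbf{Lower bound for $u$.} By the definition of $d_e$, the set $E:=B_{\alpha+\varepsilon}(x)\cap A$ has positive measure. For $|y-x|<\delta$ and $z\in E$ we have $|y-z|<\alpha+\varepsilon+\delta$. Let $m=\operatorname{ess\,inf}_A u_0>0$. Then
\[
u(y,t)\ge m|E|(4\pi a t)^{-d/2}e^{-(\alpha+\varepsilon+\delta)^2/(4at)}.
\]

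\textbf{Upper bound for $v$.} Here $v_0=0$ a.e. on $B_\beta(x)$. For $|y-x|<\delta$, every $z\notin B_\beta(x)$ satisfies $|y-z|\ge\beta-\delta=:\rho$. Therefore
\[
v(y,t)\le\|v_0\|_\infty\int_{|w|\ge\rho}G^b_t(w)\,dw .
\]

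\textbf{Gaussian tail estimate (main step).} The main technical point is a tail bound that is uniform in small $t$. On $|w|\ge\rho$ one has $e^{-|w|^2/(4bt)}\le e^{-(1-\eta)\rho^2/(4bt)}\,e^{-\eta|w|^2/(4bt)}$. Integrating the second factor against the normalization $(4\pi bt)^{-d/2}$ gives the constant $\eta^{-d/2}$. Hence
\[
v(y,t)\le C_\eta\|v_0\|_\infty e^{-(1-\eta)\rho^2/(4bt)}.
\]
A polar-coordinate computation giving a polynomial prefactor in $\rho^2/t$ would also suffice.

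\textbf{Conclusion.} Combining the two bounds,
\[
\frac{v(y,t)}{u(y,t)}\le C\,t^{d/2}\exp\!\Big(-\frac1{4t}\Big[(1-\eta)\frac{\rho^2}{b}-\frac{(\alpha+\varepsilon+\delta)^2}{a}\Big]\Big).
\]
The bracket is a positive constant by the choice of parameters, so the right-hand side tends to $0$ as $t\downarrow0$, uniformly in $y\in B_\delta(x)$. Thus there is $\hat t>0$ with $u(y,t)-v(y,t)>0$ for all $|y-x|<\delta$ and $0<t<\hat t$. So $S_D[u_0-v_0](y)=1$ on $B_\delta(x)$, that is, $x\in\operatorname{int}S_u$.

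This argument does not need Lemma \ref{suff condi}, because the sign is shown to stabilize directly. It also explains the remark that the magnitudes $c_i$ play no role: they only enter through the constants $m$ and $\|v_0\|_\infty$, which are dominated by the exponential gap.
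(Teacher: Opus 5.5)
Your proof is correct and uses the same idea as the paper. You bound $u$ from below by the $A$-mass in a ball of radius slightly larger than $d_e(x,A)$, times the Gaussian at that radius. You bound $v$ from above by a Gaussian tail outside a ball on which $v_0$ vanishes a.e. The condition $d_e(x,A)^2/a<d_e(x,B)^2/b$ then makes the second bound exponentially negligible. The paper does the same thing in a single integral: it substitutes $\omega\mapsto k\omega$, cuts at one radius $\rho$, and disposes of the tail by dominated convergence instead of your $\eta$-splitting.

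The one structural difference is how you reach $x\in\mathrm{int}\,S_u$. You make the estimate uniform for $y\in B_\delta(x)$ and get it directly. The paper proves $x\in S_u$ pointwise and uses that $\{d_e(\cdot,A)<d_e(\cdot,B)/k\}$ is open because $d_e$ is continuous. Your separate treatment of the case $d_e(x,B)=\infty$ also covers a point the paper passes over.
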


\begin{proof}
 Since $d_e$ is continuous, it suffices to prove that $d_e (x,A) < d_e (x,B) {/k}$
 implies $x \in S_u$
 (resp. $d_e (x,A) > d_e (x,B) {/k}$ implies $x \in S_v$).
 The proof is parallel so we just give a proof of the case when
 $d_e (x,A) < d_e (x,B) {/k}$.
 {Fix $\rho > 0$ as $d_e (x,A) < \rho < d_e (x,B) / k$, then}
 \[
  |B_\rho (x) \cap A| = M > 0, 
  \quad  |B_{k \rho} (x) \cap B| = 0.
 \]
 We now represent the solutions $u$ and $v$ by the fundamental solution, i.e.,
 \begin{align*}
  u(x,t) & = \frac{1}{(4 \pi t)^{\frac{d}{2}}} 
  \int_{\mathbb{R}^d} u_0 (x - \omega) 
  \exp 
  \left( - \frac{|\omega|^2}{4at} \right) \textup{d} \omega, \\
  v(x,t) & = \frac{1}{(4 \pi t)^{\frac{d}{2}}} 
  \int_{\mathbb{R}^d} v_0 (x - \omega) 
  \exp 
  \left( - \frac{|\omega|^2}{4bt} \right) \textup{d} \omega \\
  & = \frac{k^d}{(4 \pi t)^{\frac{d}{2}}} 
  \int_{\mathbb{R}^d} v_0 (x - k \omega) 
  \exp 
  \left( - \frac{|\omega|^2}{4at} \right) \textup{d} \omega,
 \end{align*}
 and thus we have
 \[
  u(x,t) - v(x,t)
  = \frac{1}{(4 \pi t)^\frac{d}{2}}
  \int_{\mathbb{R}^d}
  (u_0 (x - \omega) - k^d v_0 (x - k \omega)) 
  \exp \left( - \frac{|\omega|^2}{4 a t} \right) \textup{d} \omega.
 \]
 We now divide this integral into two parts:
 \begin{align*}
  & u(x,t) - v(x,t)
   = \frac{1}{(4 \pi t)^\frac{d}{2}}
  (I_1 + I_2), \\
  & 
  \left\{
  \begin{aligned}
   I_1 & = 
  \int_{B_\rho (0)}
  (u_0 (x - \omega) - k^d v_0 (x - k \omega)) 
  \exp \left( - \frac{|\omega|^2}{4 a t} \right) \textup{d} \omega, \\
   I_2 & = 
   \int_{\mathbb{R}^d \setminus B_\rho (0)}
   (u_0 (x - \omega) - k^d v_0 (x - k \omega)) 
   \exp \left( - \frac{|\omega|^2}{4 a t} \right) \textup{d} \omega.
  \end{aligned}
  \right.
 \end{align*}
 On the first part, {we observe that} 
 $v_0 (x - k \omega) = 0$ when $\omega \in B_\rho (0)$,
 {which yields}
 \begin{align*}
  I_1 = \int_{B_\rho (0)} u_0 (x - \omega) 
  \exp \left( - \frac{|\omega|^2}{4 a t} \right) \textup{d} \omega
  & = \int_{B_\rho (x)} u_0 (y) 
  \exp \left( - \frac{|x-y|^2}{4 a t} \right) \textup{d} y \\
  & \ge 
  \int_{B_\rho (x) \cap A} u_0 (y) 
  \exp \left( - \frac{|x-y|^2}{4 a t} \right) \textup{d} y \\
  & \ge 
  c_A M
  \exp \left( - \frac{\rho^2}{4 a t} \right),
 \end{align*}
 where $c_A = \mathrm{ess} \inf_A u_0 > 0$.
 On the second part we have
 \begin{align*}
  I_2 
  & \ge - k^d C_B \int_{\mathbb{R}^d \setminus B_\rho (0)} 
  \exp \left( - \frac{|\omega|^2}{4 a t} \right) \textup{d} \omega \\
  & \ge - k^d \alpha (d) C_B \int_\rho^\infty 
  \exp \left( - \frac{r^2}{4 a t} \right) r^{d-1} \textup{d}r,
 \end{align*} 
 where $C_B = \mathrm{ess} \sup_B v_0$ and $\alpha (d)$ is the 
 surface area of the unit sphere in $\mathbb{R}^d$.
 Hence, we obtain
 \begin{align*}
  & u(x,t) - v(x,t) \\
  & \ge \frac{1}{(4 \pi t)^\frac{d}{2}}  
  \exp \left( - \frac{\rho^2}{4 a t} \right)
  \left\{ c_A M - k^d \alpha (d) C_B \int_\rho^\infty 
  r^{d-1} \exp \left( \frac{\rho^2 - r^2}{4at} \right) \textup{d}r \right\}.
 \end{align*}
 Since 
 \[
  0 \le 
  r^{d-1} \exp \left( \frac{\rho^2 - r^2}{4at} \right)
  \le 
  r^{d-1} \exp (\rho^2 - r^2)
  \quad \mbox{for} \ r \ge \rho \ \mbox{and} \ t \le \frac{1}{4a},
 \]
 and the righthand side of the above is integrable in $(\rho, \infty)$,
 Lebesgue's dominated convergence theorem implies that
 \[
  \lim_{t \to 0} \int_\rho^\infty 
  r^{d-1} \exp \left( \frac{\rho^2 - r^2}{4at} \right) \textup{d}r = 0.
 \]
 Hence, we obtain $u(x,t) - v(x,t) > 0$ for sufficiently small $t > 0$.
\end{proof}

As a conclusion of this section,
we obtain the following result 
from Lemmas \ref{suff condi} and \ref{characterization}.

\begin{thm}
 \label{thm: multiple class}
 Let $A_i \subset \mathbb{R}^d$ ($i=1, 2, \ldots, N$) be measurable sets 
 satisfying 
 $|A_i \cap A_j| = 0$ when $i \neq j$.
 Let $u_i (x,t)$ be a solution to 
 \begin{equation}
  \label{system of heat equations}
  \begin{array}{lcl}
   \partial_t u_i = a_i \Delta u_i & \mbox{in} & \mathbb{R}^d \times (0,T), \\
   u_i (\cdot, 0) = u_{0,i} := c_i \chi_{A_i}
    & \mbox{on} & \mathbb{R}^d
  \end{array}  
 \end{equation}
 for $i=1, 2, \ldots, N$,
 where $c_i$ and $a_i$ are positive constants.
 \begin{enumerate}
  \item \label{N-class suff condi}
	Let $\hat{x} \in \mathbb{R}^d \setminus (\bigcup_{j=1}^N \overline{A_i})$.
	Assume that $u_{0,i}$ satisfies \eqref{num of disconti. of init},
	\eqref{left-right conti. of init} for $i=1, \ldots, N$,
	and the numbers of changes of sign $Z[\bar{w}_{0,i,j}]$
	for
	\begin{equation}
	 \label{rescaling init}
	 \bar{w}_{0,i,j} (r;\hat{x})
	 := \bar{u}_{0,i} (r; \hat{x}) - \bar{u}_{0,j} (k_{i,j} r; \hat{x}),
	 \quad k_{i,j} = \sqrt{a_j/a_i}	 
	\end{equation}
	is locally finite whenever $i \neq j$.
	Then, 
	$S_D [u_{0,i} - u_{0,j}] (\hat{x})$
	is well-defined whenever $i \neq j$.
	Moreover, the relation
	$u_{0,i} \succsim u_{0,j}$, which is defined by
	$S_D [u_{0,i} - u_{0,j}] \ge 0$, is an preorder 
	of $\{ u_{0,1}, \ldots, u_{0,N} \}$ at $\hat{x}$.
  \item Under the hypothesis of \eqref{N-class suff condi},
	the classification sets defined by
	\begin{equation}
	 \label{S_i for continuum data}
	 S_i = \{ x \in \mathbb{R}^d \mid 
	 S_D [u_{0,i} - u_{0,j}] (x) = 1 
	 \quad \mbox{for} \ j \neq i \}	 
	\end{equation}
	are characterized as
	\[
	 \mathrm{int} S_i \supset \{ x \in \mathbb{R}^d \mid 
	 d_e (x,A_i) < {\frac{1}{k_{i,j}}} d_e (x, A_j) 
	 \quad \mbox{for} \ j \neq i \}.
	\]
 \end{enumerate}
\end{thm}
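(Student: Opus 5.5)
Part (i) reduces to Lemma \ref{suff condi} applied pairwise. Fix $i\neq j$ and set $u_0=u_{0,i}$, $v_0=u_{0,j}$, $a=a_i$, $b=a_j$, so that $k=\sqrt{b/a}=k_{i,j}$ and $\bar w_0$ in \eqref{radial average w} is exactly $\bar w_{0,i,j}$ in \eqref{rescaling init}.

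Since $\hat x\notin\bigcup_\ell\overline{A_\ell}$, there is a ball around $\hat x$ on which every $u_{0,\ell}$ vanishes. Hence all $u_{0,\ell}$ are continuous at $\hat x$ with $u_{0,i}(\hat x)=u_{0,j}(\hat x)=0$. The initial data are nonnegative, bounded and measurable. Assumptions \eqref{num of disconti. of init}, \eqref{left-right conti. of init} and local finiteness of $Z[\bar w_{0,i,j}]$ are hypotheses. Lemma \ref{suff condi} therefore gives that $S_D[u_{0,i}-u_{0,j}](\hat x)$ is well defined.

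For the preorder, I will use $u_i(\hat x,t)-u_j(\hat x,t)=-(u_j(\hat x,t)-u_i(\hat x,t))$, which gives the antisymmetry
\[
S_D[u_{0,i}-u_{0,j}](\hat x)=-S_D[u_{0,j}-u_{0,i}](\hat x).
\]
\emph{Totality:} for each pair, one of $S_D[u_{0,i}-u_{0,j}]$ and $S_D[u_{0,j}-u_{0,i}]$ is $\ge0$.

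\emph{Reflexivity:} $S_D[u_{0,i}-u_{0,i}]=0$ trivially.

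\emph{Transitivity:} suppose $S_D[u_{0,i}-u_{0,j}]\ge0$ and $S_D[u_{0,j}-u_{0,\ell}]\ge0$. Take $\hat t$ as the minimum of the three thresholds given by Lemma \ref{suff condi} for the pairs $(i,j)$, $(j,\ell)$, $(i,\ell)$; on $(0,\hat t)$ each difference has constant sign. Then
\[
u_i-u_\ell=(u_i-u_j)+(u_j-u_\ell)\ge0 \quad\text{on } (0,\hat t).
\]
Because the sign of $u_i-u_\ell$ is constant there, it cannot be $-1$, so $S_D[u_{0,i}-u_{0,\ell}]\ge0$. The case $\ell=i$ is trivial. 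If the third pair involves an index equal to another, the same argument applies with that step being trivial.

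Part (ii) comes from Lemma \ref{characterization}. Fix $x$ in the right-hand set, so $d_e(x,A_i)<d_e(x,A_j)/k_{i,j}$ for every $j\neq i$. For each $j$, apply Lemma \ref{characterization} with $A=A_i$, $B=A_j$, $u_0=c_i\chi_{A_i}$, $v_0=c_j\chi_{A_j}$, $a=a_i$, $b=a_j$. Its hypotheses hold: $|A_i\cap A_j|=0$, the essential infima on $A_i$ and $A_j$ are $c_i,c_j>0$, and the data vanish outside these sets. (If some $A_i$ is null, $d_e(x,A_i)=\infty$ and the set is empty, so nothing needs proving.)

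The lemma yields an open neighbourhood $U_j$ of $x$ on which $S_D[u_{0,i}-u_{0,j}]=1$; this is exactly the interior statement the lemma provides. Setting $U=\bigcap_{j\neq i}U_j$, a finite intersection of open sets, gives $U\subset S_i$ and so $x\in\mathrm{int}\,S_i$.

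I do not expect a real obstacle. The one delicate point is transitivity in part (i): it needs the three small-time thresholds to be taken simultaneously, so that sign information from the pairs $(i,j)$ and $(j,\ell)$ transfers to $(i,\ell)$. It also needs the well-definedness of $S_D$ for the pair $(i,\ell)$, which part (i) supplies, so that the limit of the sign exists.
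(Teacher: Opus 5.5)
Your proposal is correct and takes essentially the paper's route: the paper derives the theorem by applying Lemma~\ref{suff condi} pairwise (with $a=a_i$, $b=a_j$, $k=k_{i,j}$) for part (i), and Lemma~\ref{characterization} pairwise, followed by a finite intersection of open neighbourhoods, for part (ii). Your check of reflexivity and transitivity, taking a common small-time threshold so that $u_i-u_\ell=(u_i-u_j)+(u_j-u_\ell)\ge 0$, supplies the argument for the preorder claim, which the paper states without proof.
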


\subsection{For discrete data}
\label{sec: for discrete data}

At the practical level, each training data $A_i$ will be given as 
discrete data: 
\begin{equation}
 \label{discrete training data}
 A_i = \{ p_{i,j} \in \mathbb{R}^d \mid j =1, 2, \ldots, K_i \}
 \quad \mbox{for} \ j = 1, 2, \ldots, N  
\end{equation}
for constants $K_i \in \mathbb{N}$.
In such a case, the initial data $u_{0,i}$ 
defined by \eqref{initial data}
satisfies $u_{0,i} \equiv 0$ in $L^\infty (\mathbb{R}^d)$,
which implies $u_i \equiv 0$ for $i=1, \ldots, N$, 
and thus the diffusive method mathematically
fails to produce meaningful results.
This issue was addressed in the previous work \cite{GGOU:2013CPAA},
where each point data $p_{i,j}$ was expanded into a small cube $Q_{i,j}$, 
and the initial data $u_{0,i}$ was defined as \eqref{initial data}
with $A_i = \bigcup_{j=1}^{K_i} Q_{i,j}$.
While this regularization procedure works well in the case 
$d=2$, in the present paper 
we propose a different approach that directly employs 
the heat kernel to scale the algorithm to higher dimensions.

Our proposed approach is to use the heat kernel directly.
For given data $A_i$ and diffusion constant $a_i > 0$, 
define 
\begin{equation}
 \label{def Psi}
 \Psi_i (x,t) = \sum_{j=1}^{K_i}
 \frac{1}{(4 \pi t)^{\frac{d}{2}}}
 \exp \left( - \frac{|x-p_{i,j}|^2}{4 a_i t} \right)
 \quad 
 \mbox{for} \ i=1, \ldots, N 
\end{equation}
and set
\begin{equation} \label{S_i}
 S_i = \{ x \in \mathbb{R}^d \mid 
 \lim_{t \to 0} \mathrm{sgn} (\Psi_i (x,t) - \Psi_j (x,t)) = 1 
 \quad \mbox{for every} \ j \neq i \}.
\end{equation} 

\begin{thm}
 For $i=1, 2, \ldots, N$, the set $S_i$ is characterized as 
 \begin{align*}
  \mathrm{int} S_i & \supset P_i := \{ x \in \mathbb{R}^d \mid 
  \mathrm{dist} (x, A_i) < \mathrm{dist} (x, A_j)/k_{i,j} 
  \quad \mbox{for every} \ j \neq i \}, \\
  \mathrm{int} S_i^c & \supset \{ x \in \mathbb{R}^d \mid 
  \mbox{there exists $j_0 \neq i$ such that} \
  \mathrm{dist} (x, A_i) > \mathrm{dist} (x, A_{j_0})/k_{i,j_0} \},  
 \end{align*}
 where $k_{i,j} = \sqrt{a_j / a_i}$
 and $\mathrm{dist} (x, A_i) = \min \{|x-y| \mid y \in A_i \}$.
 In particular, if $a_i = a_1$ for $1 \le i \le N$,
 then the set $S_i$ is characterized as $\mathrm{int} S_i = P_i$.
 In other words, $S_i$ gives the nearest neighbor classification for 
 $\{ A_i \mid i=1, \ldots, N \}$.
\end{thm}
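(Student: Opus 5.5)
The plan is a direct comparison of the two finite Gaussian sums, followed by a perturbation argument for the equality case. Fix $x\in\mathbb{R}^d$ and write $\delta_\ell=\mathrm{dist}(x,A_\ell)$. Since each $A_\ell$ is finite, the minimum defining $\delta_\ell$ is attained. I assume throughout that the sets $A_\ell$ are pairwise disjoint.

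\textbf{Step 1 (two-sided bounds).} Keeping only a nearest point of $A_i$ in the sum \eqref{def Psi} gives
\[
\Psi_i(x,t)\ \ge\ (4\pi t)^{-d/2}\exp\Bigl(-\frac{\delta_i^2}{4a_it}\Bigr).
\]
Bounding every term of $\Psi_j$ by its largest term gives
\[
\Psi_j(x,t)\ \le\ K_j(4\pi t)^{-d/2}\exp\Bigl(-\frac{\delta_j^2}{4a_jt}\Bigr).
\]
Dividing,
\[
\frac{\Psi_j(x,t)}{\Psi_i(x,t)}\ \le\ K_j\exp\Bigl(-\frac{1}{4t}\Bigl(\frac{\delta_j^2}{a_j}-\frac{\delta_i^2}{a_i}\Bigr)\Bigr).
\]
The condition $\delta_i<\delta_j/k_{i,j}$ with $k_{i,j}=\sqrt{a_j/a_i}$ is exactly $\delta_i^2/a_i<\delta_j^2/a_j$. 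Hence the ratio tends to $0$ as $t\to0$. So $\Psi_i-\Psi_j>0$ for all small $t$, and $\lim_{t\to0}\mathrm{sgn}(\Psi_i-\Psi_j)(x,t)=1$.

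Doing this for every $j\ne i$ (finitely many) gives $P_i\subset S_i$. Since $\mathrm{dist}(\cdot,A_\ell)$ is continuous, $P_i$ is open, and therefore $P_i\subset\mathrm{int}S_i$.

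\textbf{Step 2 (the complement).} Suppose $\delta_i>\delta_{j_0}/k_{i,j_0}$ for some $j_0\neq i$. The same computation with the roles of $i$ and $j_0$ swapped (bounding $\Psi_{j_0}$ below and $\Psi_i$ above) gives $\Psi_{j_0}>\Psi_i$ for small $t$. Thus the limiting sign of $\Psi_i-\Psi_{j_0}$ is $-1$, and $x\notin S_i$. This set is again open by continuity of the distance functions, so it lies in $\mathrm{int}S_i^c$.

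\textbf{Step 3 (equal coefficients).} Now let all $a_\ell$ be equal, so $k_{i,j}=1$. By Step 1 it remains to show $\mathrm{int}S_i\subset P_i$. Take $x\in\mathrm{int}S_i\setminus P_i$. Then $\delta_i\ge\delta_j$ for some $j\ne i$.
\begin{itemize}
\item If $\delta_i>\delta_j$, Step 2 gives $x\in S_i^c$, a contradiction.
\item If $\delta_i=\delta_j=:\delta$, note first that $\delta>0$ by disjointness. I will produce points arbitrarily close to $x$ that lie in the open set of Step 2, which contradicts $x\in\mathrm{int}S_i$.
\end{itemize}

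\textbf{The equality case, which I expect to be the main obstacle.} Choose $q\in A_j$ with $|x-q|=\delta$ and set $x_\varepsilon=x+\varepsilon(q-x)$. Then $\mathrm{dist}(x_\varepsilon,A_j)\le(1-\varepsilon)\delta$. For $p'\in A_i$ there are two cases.
\begin{itemize}
\item If $|x-p'|>\delta$, then $|x_\varepsilon-p'|>\delta>(1-\varepsilon)\delta$ for small $\varepsilon$. This is uniform because $A_i$ is finite.
\item If $|x-p'|=\delta$, expand
\[
|x_\varepsilon-p'|^2=\delta^2-2\varepsilon(p'-x)\cdot(q-x)+\varepsilon^2\delta^2 .
\]
Since $p'\neq q$ and both lie on the sphere of radius $\delta$ about $x$, the Cauchy--Schwarz inequality is strict: $(p'-x)\cdot(q-x)<\delta^2$. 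Hence $|x_\varepsilon-p'|^2>(1-\varepsilon)^2\delta^2$.
\end{itemize}
So $\mathrm{dist}(x_\varepsilon,A_i)>(1-\varepsilon)\delta\ge\mathrm{dist}(x_\varepsilon,A_j)$. Thus $x_\varepsilon$ lies in the set of Step 2, and $x_\varepsilon\to x$ as $\varepsilon\to0$, giving the contradiction. This shows $\mathrm{int}S_i=P_i$, i.e. nearest neighbor classification.
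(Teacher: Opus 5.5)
Your proof is correct and has the same structure as the paper's. Both use the two-sided comparison of the Gaussian sums: your ratio bound is the paper's estimate with $f_{i,j}(y)=\mathrm{dist}(y,A_i)^2-\mathrm{dist}(y,A_j)^2/k_{i,j}^2$ in the exponent. Both use continuity of the distance functions. For equal coefficients, both perturb a tie point $x$ along a segment toward a nearest data point and invoke disjointness.

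There are two differences in execution.

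First, for the interior inclusions the paper proves a uniform estimate on a ball around $x$. You instead prove the pointwise inclusions into $S_i$ and $S_i^c$, then note that $P_i$ and the second set are open. This is shorter and equally rigorous.

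Second, in the tie case you move toward the nearest point $q\in A_j$ and verify $\mathrm{dist}(x_\varepsilon,A_i)>(1-\varepsilon)\delta\ge\mathrm{dist}(x_\varepsilon,A_j)$ by strict Cauchy--Schwarz. The paper moves toward the nearest point $y\in A_i$ and uses the equality case of the triangle inequality. As written, the paper's direction gives $f_{i,j_0}(\omega_\lambda)<0$, i.e.\ points where $\Psi_i$ wins. Such points cannot show $x\notin\mathrm{int}\,S_i$, so the paper's subsequent ``$=-1$'' and ``$x\notin(\mathrm{int}\,S_i)^c$'' do not follow. The argument has to be mirrored, moving toward $z\in A_{j_0}$, and your version is exactly that corrected form.

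You also state the disjointness of the $A_\ell$ explicitly; the paper uses it only tacitly, and the tie case fails without it.
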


\begin{proof}
 We now assume that 
 $x \in P_i$ 
 to show $x \in \mathrm{int} S_i$.
 Let us set
 \begin{align*}
  f_{i,j} (y) := \mathrm{dist} (y, A_i)^2 - \frac{\mathrm{dist} (y,A_j)^2}{k_{i,j}^2}.
 \end{align*}
 Since $y \mapsto \mathrm{dist} (y,A_j)$ is continuous for every $j = 1, \ldots, N$,
 there exists $\delta_{i,j} > 0$ such that
 \begin{align*}
  f_{i,j} (y) < \frac{f_{i,j} (x)}{2} < 0
  \quad \mbox{for} \ j \neq i \ 
  \mbox{whenever} \ |y - x| < \delta_{i,j}.  
 \end{align*}
 This implies that
 \begin{align*}
  & \Psi_i (y,t) - \Psi_j (y,t) \\
  & \ge \frac{1}{(4 \pi t)^{\frac{d}{2}}}
  \exp \left( - \frac{\mathrm{dist} (y, A_i)^2}{4 a_i t} \right)
  - \frac{K_j}{(4 \pi t)^{\frac{d}{2}}}
  \exp \left( - \frac{\mathrm{dist} (y, A_j)^2}{4 a_j t} \right) \\
  & = \frac{1}{(4 \pi t)^{\frac{d}{2}}}
  \exp \left( - \frac{\mathrm{dist} (y, A_i)^2}{4 a_i t} \right)
  \left\{
  1 - K_j
  \exp \left( \frac{f_{i,j} (y)}{4 a_i t} 
  \right)
  \right\} \\
  & \ge \frac{1}{(4 \pi t)^{\frac{d}{2}}}
  \exp \left( - \frac{\mathrm{dist} (y, A_i)^2}{4 a_i t} \right)
  \left\{
  1 - K_j
  \exp \left( \frac{f_{i,j} (x)}{8 a_i t} 
  \right)
  \right\}.
 \end{align*}
 Recall that $f_{i,j} (x) < 0$, and thus there exists $\hat{t}_{i,j} > 0$
 such that
 \[
  \Psi_i (y,t) - \Psi_j (y,t) > 0 
  \quad \mbox{for} \ 0 < t 
  < \hat{t}_{i,j}
  \ \mbox{and} \ |y-x| < \delta_{i,j}
 \]
 if $j \neq i$,
 which implies $x \in \mathrm{int} S_i$.

 On the other hand, assume that 
 there exists $j_0 \neq i$ such that
 \[
  \mathrm{dist} (x, A_i) > \mathrm{dist} (x. A_{j_0}).
 \]
 By the continuity of $\mathrm{dist} (x, A_j)$
 for $j=1, \ldots, N$, 
 there exists $\delta > 0$ such that
 \[
  f_{i,j_0} (y) > \frac{f_{i,j_0} (x)}{2} > 0
  \quad \mbox{if} \ |y-x| < \delta.
 \]
 Fix $y$ satisfying $|y - x| < \delta$.
 We now observe that
 \begin{align*}
  & \Psi_i (y,t) - \Psi_{j_0} (y,t) 
  \le \frac{1}{(4 \pi t)^{\frac{d}{2}}}
  \exp \left( - \frac{\mathrm{dist} (y,A_i)^2}{4 a_i t} \right)
  \left\{ K_i 
  - \exp \left( \frac{f_{i,j_0} (x)}{8 a_i t} \right) \right\}.
 \end{align*}
 The above implies that there exists $t^* > 0$ such that
 \[
  \Psi_i (y,t) - \Psi_{j_0} (y,t)
  < 0 \quad \mbox{for} \ 0 < t < t^*,
 \]
 and thus $\lim_{t \to 0} \mathrm{sgn} (\Psi_i (y,t) - \Psi (y,t)) = -1$
 for $|y - x| < \delta$, that is, $x \in \mathrm{int} S_i^c$.

 Finally, assume that $a_i = a_1$ for $i=1, \ldots, N$ and thus $k_{i,j} = 1$.
 We now demonstrate that $P_i^c \subset (\mathrm{int} S_i)^c$.
 Let $x \in P_i^c$.
 There exists $j_0 \neq i$ such that $f_{i,j_0} (x) \ge 0$.
 If $f_{i, j_0} (x) > 0$, then we obtain $x \in \mathrm{int} S_i^c 
 \subset (\mathrm{int} S_i)^c$ by the previous argument.
 Thus, it remains the case when $f_{i, j_0} (x) = 0$,
 that is, $\mathrm{dist} (x, A_i) = \mathrm{dist} (x, A_{j_0})$.
 Let $y \in A_i$ and $z \in A_{j_0}$ be such that
 \[
  |x-y| = \mathrm{dist} (x, A_i), \quad
  |x-z| = \mathrm{dist} (x, A_{j_0}).
 \]
 Note that $y \neq z$, but $|x-y| = |x-z|$.

 Set
 \[
  \omega_\lambda = (1-\lambda) x + \lambda y 
  \quad \mbox{for} \ \lambda \in (0,1).
 \]
 We claim that
 \begin{equation}
  \mathrm{dist} (\omega_\lambda, A_i) < \mathrm{dist} (\omega_\lambda, A_{j_0})
  \quad \mbox{for every} \ \lambda \in (0,1),
  \label{claim:2.15}
 \end{equation}
 and thus $f_{i,j_0} (\omega_\lambda) < 0$ for $\lambda \in (0,1)$.
 This yields 
 $\lim_{t \to 0} 
 \mathrm{sgn} (\Psi_i (\omega_\lambda, t) - \Psi_{j_0} (\omega_\lambda, t)) = -1$
 for $\lambda \in (0,1)$.
 Hence, we obtain $x \notin (\mathrm{int} S_i)^c$.

 We now prove \eqref{claim:2.15}.
 Let $z_0$ be such that 
 $|\omega_\lambda - z_0| = \mathrm{dist} (\omega_\lambda, A_{j_0})$.
 First, since $|\omega_\lambda - y| = (1-\lambda)|x-y|$
 we obtain
 \begin{equation}
  \mathrm{dist} (\omega_\lambda, A_i) \le (1-\lambda)|x-y|
  \quad \mbox{for} \ \lambda \in (0,1).
  \label{claim:2.16}  
 \end{equation}
 On the other hand, 
 \begin{equation}
  \mathrm{dist} (\omega_\lambda, A_{j_0}) \ge (1-\lambda)|x-z|
   \quad \mbox{for} \ \lambda \in (0,1).
   \label{claim:2.17}
 \end{equation}
 In fact, 
 if $|\omega_\lambda - z_0| < (1 - \lambda) |x-z|$, then
 \[
  |x-z_0| \le |x - \omega_\lambda| + |\omega_\lambda - z_0| 
  < \lambda |x-y| + (1 - \lambda) |x-z| = |x-z|,
 \]
 which contradicts the definition of $z$.
 Thus, we obtain
 \[
  \mathrm{dist} (\omega_\lambda, A_i) \le \mathrm{dist} (\omega_\lambda, A_{j_0}).
 \]
 Now, if 
 $\mathrm{dist} (\omega_\lambda, A_i) = \mathrm{dist} (\omega_\lambda, A_{j_0})$,
 then \eqref{claim:2.16} and \eqref{claim:2.17} imply that
 $|\omega_\lambda - z_0| = (1 - \lambda)|x-z|$.
 Furthermore, 
 \[
  |x-z_0| \le |x - \omega_\lambda| + |\omega_\lambda - z_0| 
  = \lambda |x-y| + (1-\lambda)|x-z|
  \le \mathrm{dist} (x, A_{j_0}),
 \]
 and thus $|x - z_0| = \mathrm{dist} (x, A_{j_0}) = |x-z|$.
 Therefore, equality holds throughout the above of inequalities. 
 By the equality case of the triangle inequality,
 $x - \omega_\lambda$ and $\omega_\lambda - z_0$ are parallel.
 Hence, $z_0 = y$ and thus $z_0 \in A_i \cap A_{j_0}$,
 a contradiction.
 This proves \eqref{claim:2.15}.
\end{proof}

{

\section{Algorithm}
\label{sec: algorithm}

\subsection{Fundamental approach}

A fundamental algorithm of the classification 
by \eqref{S_i for continuum data}
for continuum training data $A_i$, $i=1, \ldots, N$ 
is just solving the heat equation \eqref{system of heat equations}
in a very short time.
Namely, the classification at practical level is 
obtained by
\[
 S_i^h = \{ x \in \mathbb{R}^d \mid u_i (x,h) > u_j (x,h) 
 \quad \mbox{for} \ j \neq i \}
\]
for a sufficiently small constant $0 < h \ll 1$.
Note that, numerically, 
$u_i (x, h) \ll 1$ where $\chi_{A_i} (x) = 0$ since 
$\lim_{t \to 0} u_i (x,t) = 0$.
Therefore, when we solve \eqref{system of heat equations}
with a explicit finite difference scheme, 
we need an enough numerical time step to obtain 
accurate $S_i^h$.

When we have the discrete training data \eqref{discrete training data},
a fundamental algorithm of the classification 
is using a single parameter $h$ derived from the training dataset for all inputs.
To find the single parameter $h$, 
we first compute $\Psi_k (p_{i,j}, h)$ defined by
\eqref{def Psi}, 
and count the incorrect predictions of training data,
which is defined by
\begin{equation}
 \label{fault number}
  \Lambda_h = \# \left\{ 
 (i,j) \in \bigcup_{\ell = 1}^N \{ \ell \} \times \{ 1, \ldots K_\ell \} 
 \ \middle| \ 
 \begin{aligned}
  & \mbox{there exists} \ k \neq i \ \mbox{such that} \\
  & \Psi_k (p_{i,j}, h) > \Psi_i (p_{i,j}, h)
 \end{aligned}  
 \right\} 
\end{equation}
for $h > 0$.
Note that $\Lambda_h$ may be positive for sufficiently
large $h$. 
On the other hand, clearly there exists $h_* > 0$ such that
$\Lambda_h = 0$ for $h \in (0,h_*)$, 
since 
\begin{align*}
 & \lim_{h \to 0} \Psi_i (p_{i,j},h) = \infty
 \quad \mbox{for every} \ 
 j=1, \ldots, K_i, \ i=1, \ldots, N, \\ 
 & \lim_{h \to 0} \Psi_k (p_{i,j}, h) = 0
 \quad \mbox{for} \ k \neq i.
\end{align*}
We now find $h_0 > 0$ such that $\Lambda_{h_0} = 0$
by successively replacing $h$ with $r h$ while $\Lambda_h > 0$,
where $r \in (0,1)$ is a constant (e.g., $r=1/2$).
This procedure yields the desired parameter $h_0 > 0$,
that is $\Lambda_{h_0} = 0$.
Hence, we obtain the baseline classifier using 
$\Psi_{i,j} (x, h_0)$; 
set
\[
 S_i^{h_0} = \{ x \in \mathbb{R}^d \mid \ 
 \Psi_i (x, h_0) = \max_{1 \le k \le N} \Psi_k (x, h_0) \}.
\]
This algorithm is summarized in Algorithm~\ref{algo: fundamental}.

\begin{algorithm}[H] 
    \caption{Baseline approach for discrete training data}
    \label{algo: fundamental}
    \begin{algorithmic}  
    \STATE Data: Training data 
     $A_i = \{ p_{i,j} \mid j = 1, 2, \ldots, K_i \}$ for $i = 1, 2, \ldots, N$
    \STATE Define $\Psi_i (x, t)$ by \eqref{def Psi} for $i = 1, 2, \ldots, N$
    \STATE Define $r \in (0,1)$
    \STATE Initialize $h \leftarrow 1$
    \STATE Define $\Lambda_h$ by \eqref{fault number}
    \WHILE{$\Lambda_h > 0$}
    \STATE $\displaystyle h \leftarrow r h$
    \ENDWHILE
    \STATE Find $i$ such that 
     $\Psi_i(x, h) = \max_{1 \le k \le N} \Psi_k (x, h)$ 
     for every testing data $x$
    \end{algorithmic}
\end{algorithm}

Let us denote the predicted label of an input $x \in \mathbb{R}^d$
by $i(x) \in \{1, \ldots, N \}$.
Clearly, Algorithm~\ref{algo: fundamental} yields that
$i(p_{\ell, j}) = \ell$
for all training data $p_{\ell,j} \in A_\ell$
(for every $\ell = 1, \ldots, N$ and $j = 1, \ldots, K_\ell$).
Our method follows 
the empirical risk minimization in this sense.
This algorithm works well for lower spatial dimension 
and a small range of the data,
like as $d=2$ and $p_{i,j} \in [-1,1]^d$.

\subsection{Algorithms for higher dimensional data}
\label{sec: 3.2}

At the practical level of pattern recognition,
the dimension $d$ of input space is too high.
In fact, for example, the testing and training data
of MNIST database \cite{MNIST_Url,MNIST}
are provided as $x \in [0,255]^d$ 
vector data with $d=28 \times 28$ dimension,
which we use in next section.
Therefore, 
numerical underflow or overflow often occurs in the
computation of $\Psi_i$ 
because of the extremely huge quantity
$(4 \pi t)^{-d/2}$ and extremely small number $\exp (- |x-p_{i,j}|^2 / (4t))$
for $t \ll 1$,
even if we normalize that, for example,  $x \in [0,1]^d$.
Thus, we now propose an algorithm to establish 
the idea of diffusive sign for such a high dimensional
feature space.

From now on, we put $a_i = 1$ for every $i = 1, 2, \dots, N$ for simplicity. 
For an input $x \in \Omega$,
the problem is to find a label $i = 1, 2, \ldots, N$ such that 
$x \in S_i$, where $S_i$ is the set given by  \eqref{S_i}. 
Clearly, by  \eqref{S_i} $x \in S_i$ if and only if 
there exists $t^* > 0$ such that
\[
 \Psi_i(x, t) > \Psi_j(x,t)
 \quad \mbox{for} \ j \neq i
 \ \mbox{provided that} \ t \in (0,t^*),
\]
that is, 
\[
 \Psi_i(x, t) = \max \{ \Psi_j(x,t) \mid \ j = 1, 2, \dots, N \}
 =: \widehat{\Psi} (x,t)
 \quad \mbox{for} \ t \in (0,t^*).
\] 
To address the issue of underflow or overflow in 
the computation of $\Psi_i (x,t) - \Psi_j (x,t)$, 
we normalize it to
\[
 (4 \pi t)^{\frac{d}{2}} (\Psi_i (x,t) - \Psi_j (x,t)) 
 = 
 \sum_{k=1}^{K_i} \exp \left( - \frac{|x - p_{k,i}|^2 }{4 t} \right)
 - \sum_{k=1}^{K_j} \exp \left( - \frac{|x - p_{k,j}|^2 }{4 t} \right) 
\]
which limits the numerical issues to 
underflow only.
Thanks to the normalization, 
our classification algorithm is now rephrased with
the functions 
\begin{equation}
 \Phi_i (x, \alpha) = \sum_{j=1}^{K_i}
 \exp \left( - \alpha  |x-p_{i,j}|^2  \right)
 \mbox{ for } \ i=1, \ldots, N, \ 
 x \in \mathbb{R}^d  \mbox{ and }  \alpha > 0,  \label{def_Phi}
\end{equation}
that is, find the label $i (x) \in \{ 1, 2, \ldots, N \}$
such that
there exists $\alpha^* > 0$ satisfying
\begin{equation} 
 \label{def_i} 
  \Phi_{i (x)} (x, \alpha) = \max \{ \Phi_j(x, \alpha) \mid  \ j = 1, 2, \dots, N \}
 =: \widehat{\Phi} (x,\alpha)
 \quad \mbox{for} 
 \ \alpha \in (0,\alpha^*) 
\end{equation}
provided that $\Phi_j (x, \alpha) \ll 1$
so that computed value of $\Phi_j (x, \alpha)$ often underflows
for every $j \in \{ 1, \ldots, N\}$.


In order to get the label $i(x)$ satisfying \eqref{def_i},
we exploit this underflow behavior as part of the computational procedure.
Specifically, for $x \notin \bigcup_{i=1}^N A_i$,
we first choose $\alpha_0 > 0$ \emph{sufficiently large} 
so that all computed values $\Phi_i (x, \alpha_0)$ fall below
machine precision and hence underflow.
In other words, for sufficienlty small $\varepsilon > 0$,
we find $\alpha_0 > 0$ satisfying
\begin{align*}
 & \widehat{\Phi} (x, \alpha_0) < \varepsilon.
\end{align*}
Next, we gradually decrease $\alpha_0$ 
to find the label $i(x)$ whose value is the first
to emerge from underflow, that is, find
$\alpha^* \le \alpha_0$ and $i(x) \in \{ 1, \ldots, N \}$
so that
\begin{align*}
 & \alpha^* = \inf \{ \hat{\alpha} \mid \widehat{\Phi} (x, \alpha) < \varepsilon
 \ \mbox{for} \ \alpha \in [\hat{\alpha}, \infty) \}
 \quad 
 \mbox{and} \quad
 \Phi_{i(x)} (x, \alpha^*)
 = \widehat{\Phi} (x, \alpha^*).
\end{align*}
At the practical level, set $r \in (0,1)$ and
$\alpha_k = r^k \alpha_0$,
and compute $\Phi_i (x, \alpha_k)$ for every 
$i = 1, \ldots, N$ to find $k_0 \ge 0$ such that
\[
 \widehat{\Phi} (x, \alpha_{k_0}) \ge \varepsilon, \quad
 \widehat{\Phi} (x, \alpha_k) < \varepsilon
 \quad \mbox{for} \ 0 \le k < k_0.
\]
Then, there exists a label $i \in \{ 1 , \ldots, N \}$
so that $\Phi_i (x, \alpha_k) \ge \varepsilon$.
Hence, we set the label $i(x)$ of $x$ such that
\[
 \Phi_{i(x)} (x, \alpha_k) = \widehat{\Phi} (x, \alpha_k).
\]
Due to this argument, we propose the following algorithm
to find $i(x)$ satisfying \eqref{def_i}.
(See also Algorithm~\ref{alg1}.)

\begin{enumerate}
 \item Set the constants $r \in (0,1)$ and $\varepsilon > 0$,
       where $\varepsilon$ is chosen so that
       the numerical result of $\Phi_j (x, \alpha)$ underflows when
       $\Phi_j (x, \alpha) \le \varepsilon$.
 \item Initialize $\alpha = \alpha_0$ satisfying $\Phi_j (x, \alpha_0) \le \varepsilon$
       for every $j=1, \ldots, N$.
 \item \label{alg2: update alpha}
       Compute $\Phi_i (x, \alpha)$ for every $i=1, \ldots, N$.
 \item Check the values of $\Phi_i (x, \alpha)$.
       \begin{enumerate}
	\item If $\Phi_i (x, \alpha) \le \varepsilon$ for every $i=1, \ldots, N$,
	      then update $\alpha$ to $r \alpha$ with a constant $r \in (0,1)$
	      and return to \eqref{alg2: update alpha}.
	\item If there exists $i \in \{1, \ldots, N \}$ such that
	      $\Phi_i (x, \alpha) > \varepsilon$
	      (i.e., there is a numerical result which does not
	      underflow), 
	      then find 
	      \[
	       i(x) = \arg \max_{1 \le i \le N} \Phi_i (x, \alpha)
	      \]
	      as the classification result of the datum $x$.
       \end{enumerate}
\end{enumerate}

\begin{algorithm}[H] 
    \caption{Find  $i$ satisfying \eqref{def_i}
 for each testing datum $x$}
    \label{alg1}
    \begin{algorithmic}  
    \STATE Data: Training data 
     $A_i = \{ p_{i,j} \mid j = 1, 2, \ldots, K_i \}$ for $i = 1, 2, \ldots, N$.
    \STATE Define $\Phi_i (x, \alpha)$ by \eqref{def_Phi} for $i = 1, 2, \ldots, N$.
    \STATE Define $r \in (0,1)$ and $\varepsilon \ge 0$.
     \STATE Initialize $\alpha \leftarrow \alpha_0$
     so that $\max \{ \Phi_j (x, \alpha) \mid \ j=1, \ldots, N \} \le \varepsilon$.
     \WHILE{$\max\{ \Phi_j(x, \alpha) \mid  j = 1, 2, \dots, N\} \le \varepsilon$}
    \STATE $\displaystyle \alpha \leftarrow r \alpha$
    \ENDWHILE
    \STATE Find $i (x)$: 
     $\Phi_{i (x)} (x, \alpha) = \max\{ \Phi_j(x, \alpha) \mid  j = 1, 2, \dots, N\}$ 
    \end{algorithmic}
\end{algorithm}



When we get some new classification results,  
it is very easy to update our
discreminants. 
Indeed, when we get a pair of a new datum $p \in {\mathbb R}^d$ 
and its classification label $i$, 
we can update our 
discreminants just by adding the term
\[
 \tilde{\Phi}_i (x, \alpha) = \sum_{j=1}^{K_i}
  \exp \left( - \alpha  |x-p_{i,j}|^2  \right) +  \exp \left( - \alpha  |x-p|^2  \right).  
\]
Algorithm~\ref{alg3} presents the online learning algorithm of
Algorithm~\ref{alg1} due to this idea. 

\begin{algorithm}[H]
 \caption{Online learning algorithm 
 }
    \label{alg3}
    \begin{algorithmic}  
     \STATE Data: Training data $A_i = \{ p_{i,j} \mid 
     j = 1, 2, \ldots, K_i \}$ for $i = 1, 2, \ldots, N$,  
     \STATE Data: Testing datum $(p_m, \ell_m)$ for $m = 1, 2, \ldots, M$ 
     \STATE Define $\Phi_i (x, \alpha)$ by \eqref{def_Phi} for $i = 1, 2, \ldots, N$
     \STATE Define $r \in (0,1)$ and $\varepsilon > 0$
     \STATE $\hat{\Phi}_i (x, \alpha) \leftarrow \Phi_i (x, \alpha) $
    \FOR  {$m = 1$ to $M$}
    \STATE Initialize $\alpha \leftarrow \alpha_0$
     so that $\hat{\Phi}_i (p_m, \alpha) \le \varepsilon$ for $i=1, \ldots, N$
    \WHILE{$\max \{ \hat{\Phi}_i (p_m, \alpha) \mid i = 1, \ldots, N \} \le \varepsilon$}
    \STATE $\displaystyle \alpha \leftarrow  r \alpha$
    \ENDWHILE
    \STATE Find $i (p_m)$: 
     $\hat{\Phi}_{i (p_m)} (p_m, \alpha) 
     = \max\{ \hat{\Phi}_j (p_m, \alpha) \mid  j = 1, 2, \dots, N \}$ 
     \STATE $\hat{\Phi}_{i (p_m)} (x, \alpha) \leftarrow \hat{\Phi}_{i (p_m)} (x, \alpha) 
     + \exp(-\alpha |x - p_m|^2)$
    \ENDFOR 
    \end{algorithmic}
\end{algorithm}
The effectiveness of this improvement is shown in \S \ref{MNIST test}.

\section{Numerical results}
\label{numerical_results} 


\subsection{Geometrical results}

As an example of numerical results by our classification algorithm,
Figure \ref{fig: result-1_triple} presents the result of classification
for 3-class data by the fundamental algorithm.
In this simulation, we prepare random 20 data for an each class of
$3$-classes in $[-2,2]^2$, which are indicated 
by the points $\blacksquare$, $\blacktriangle$ and $\bullet$ in the figure.
On other parameters, we set $a_j = 1$ for $j=1,2,3$,
and $t = 0.00390625$ in this case for breaking the loop
finding $t > 0$.
\begin{figure}[htbp]
 \begin{center}
  \includegraphics[scale=0.3]{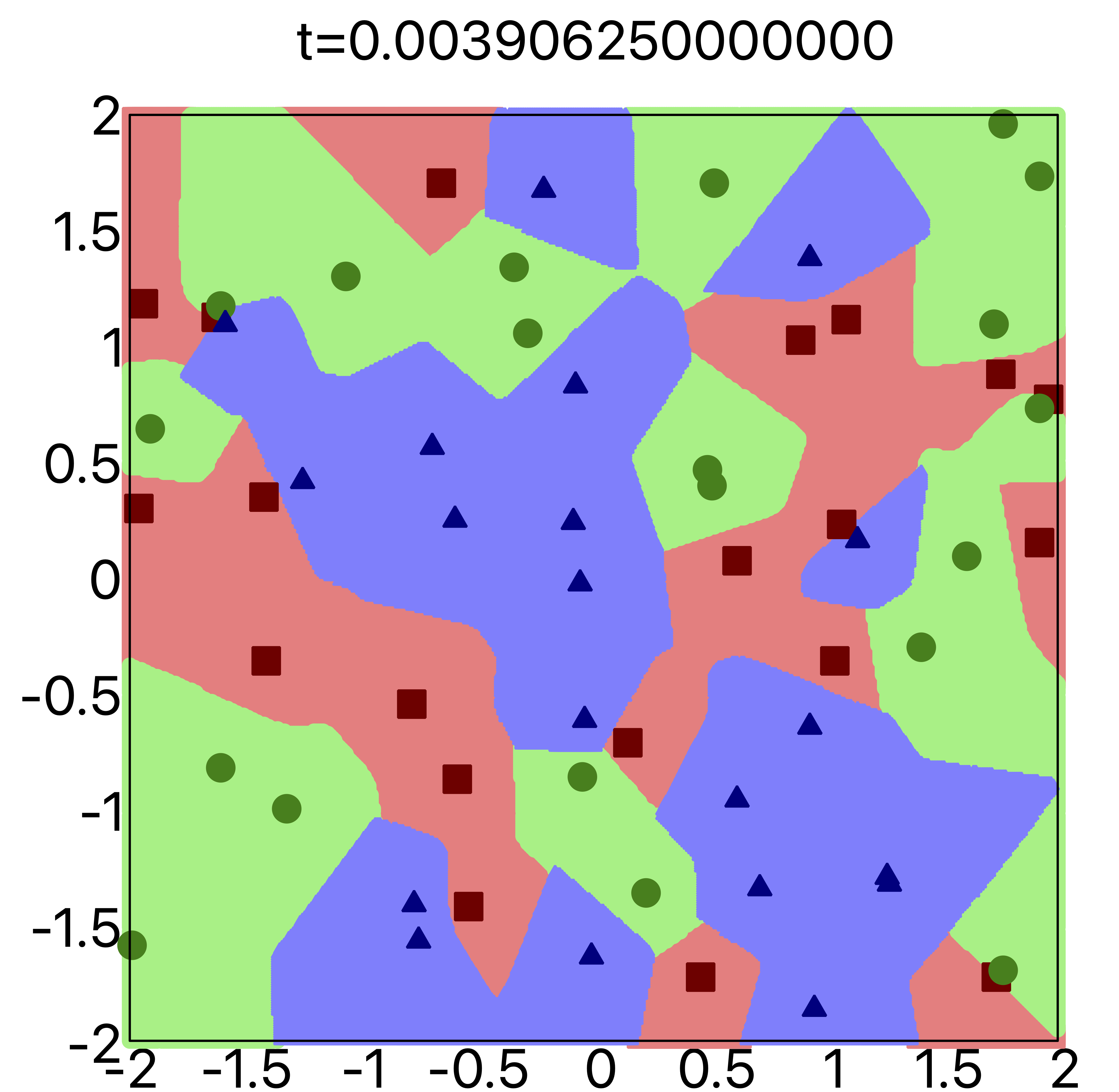}
  \caption{Numerical result of 3-class data classification.}
  \label{fig: result-1_triple}
 \end{center}
\end{figure}

The difference of diffusion coefficient yields bubble-like classification
result, since $|x-a| < k |x-b|$ yields a ball 
for every $x, a, b \in \mathbb{R}^d$ and $0 < k < 1$.
Figure \ref{fig: result-3-4_triple} indicates the classification
results 
\begin{itemize}
 \item (Left pannel) with $a_1=2$, $a_2 = a_3 = 1$,
 \item (right pannel) with $a_1 = a_2 = a_3 = 1$.
\end{itemize}

\begin{figure}[htbp]
 \begin{center}
  \begin{tabular}{c|c}
   \includegraphics[scale=0.2]{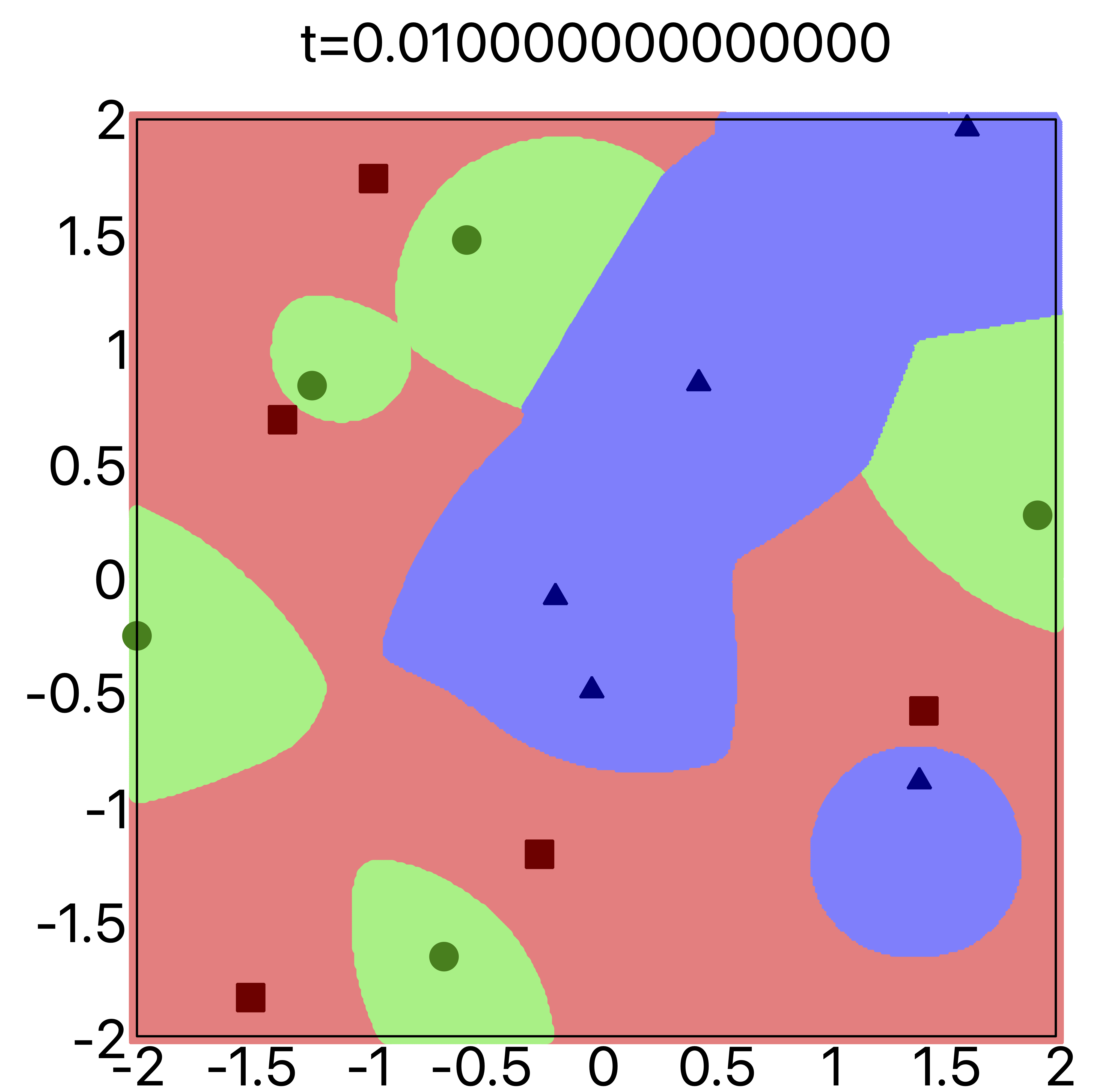} 
   & \includegraphics[scale=0.2]{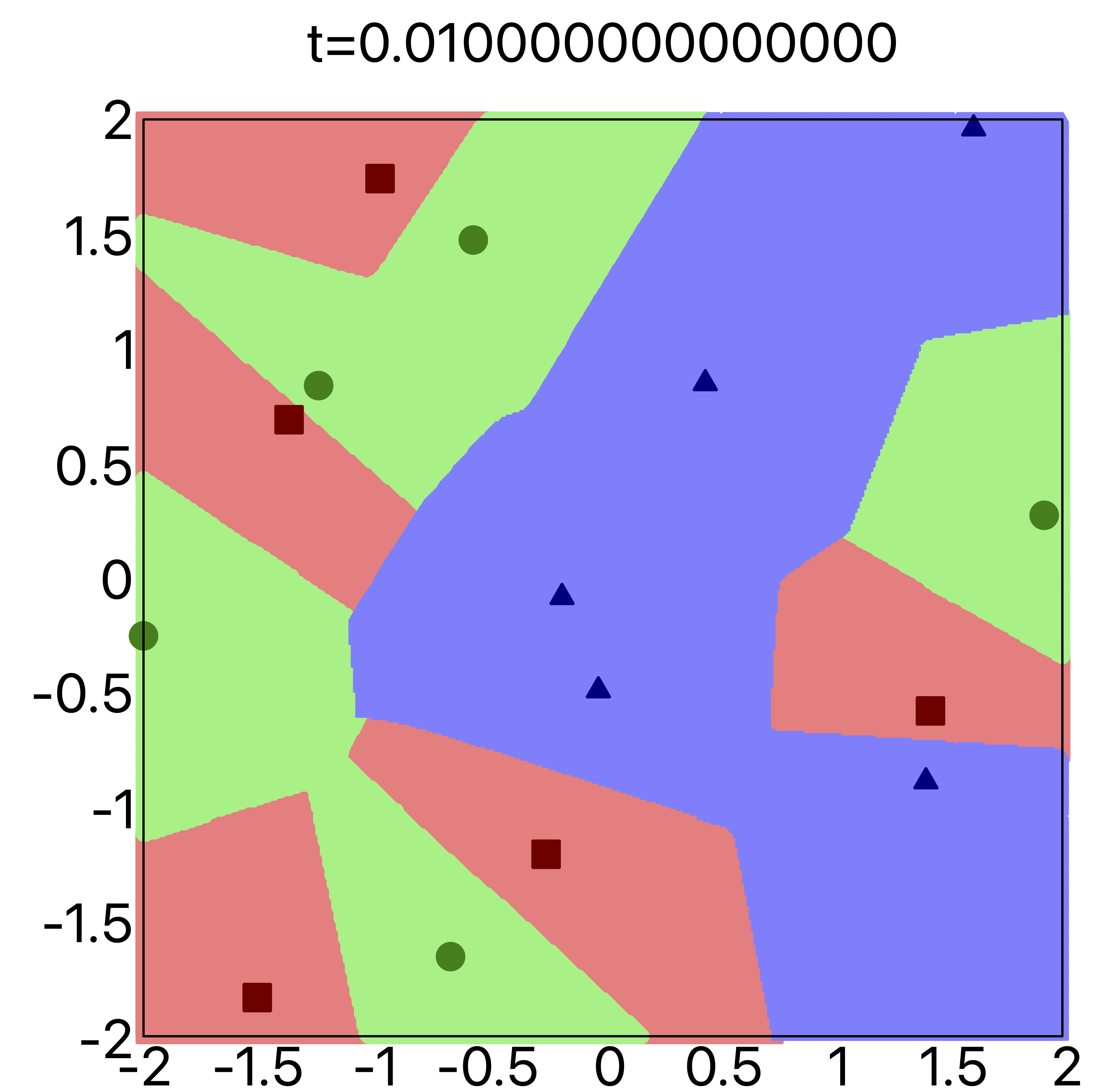} \\
   Case $a_1 = 2$, $a_2 = a_3 = 1$. &
   Case $a_1 = a_2 = a_3 = 1$.
  \end{tabular}  
  \caption{Comparison for changing diffusion coefficients.}
  \label{fig: result-3-4_triple}
 \end{center}
\end{figure}

\subsection{Numerical results for handwritten digit database}
\label{MNIST test}

In this section, we demonstrate the classification results
for handwritten digits database provided by MNIST.
It is well-known as a large database of handwritten digits 
and contains 60,000 training images and 10,000 testing images.
We here denote the number of the training data of handwritten
integer $\ell \in \{ 0, 1, \ldots, 9 \}$ by $K_\ell$,
and thus $\sum_{\ell=0}^9 K_\ell = 60,000$.  
Each image is given by gray tone, 
and the size is $d=28 \times 28$ pixel. 
Thus, we denote the training data by MNIST as
\begin{equation}
 A = \{ (\xi^k, \eta_k) \in [0,255]^d
  \times \{ 0, 1, \ldots, 9 \} \mid k=1, 2, \ldots, 60000 \}. 
  \label{mnist training data}
\end{equation}
We also use the notation
\[
 A_\ell  = \{ p^k_\ell \in [0, 255]^d
 \mid k = 1, 2, \ldots, K_\ell \}, 
\] 
for the data and the set of handwritten digits $\ell \in \{ 0, 1, \ldots, 9 \}$
when we focus on the data of the digit $\ell$.
For a datum of handwritten digits $\xi = (\xi_{i,j})_{1 \le i,j \le 28}$, 
$\xi_{i,j}=0$ (resp. $=255$)  means that the color of the 
$(i,j)$-th pixel is black (resp. white). 
Thus, as mentioned in the previous section, the classification function 
$\Phi_\ell (x, \alpha)$ corresponding to the digit $\ell$ is 
\[
\Phi_\ell (x, \alpha) = \sum_{k=1}^{K_\ell}
  \exp \left( - \alpha  |x-p^k_\ell|^2  \right). 
\]

Here and hereafter, we denote the numerical result of $\Phi_\ell (x, \alpha)$
by $\Phi_{\Delta, \ell} (x, \alpha)$, and 
$\widehat{\Phi}_{\Delta} (x, \alpha) 
= \max_{\ell \in \{ 1, \ldots, N \}} \Phi_{\Delta, \ell} (x, \alpha)$.
We mainly choose the threshold parameter $\varepsilon$
of our algorithms in \S \ref{sec: 3.2} as the machine epsilon.
We denote the machine epsilon by $\varepsilon_*$.

\par
\bigskip
\noindent
\textbf{Baseline approach.}

\newcounter{observation}

As a baseline approach of our algorithm,
we first examine our approach using a single parameter derived from
the training dataset for all inputs.
In other words, we give a pre-determined function $\Phi_\ell (x, \alpha)$ 
with a uniform constant $\alpha > 0$ as the discreminants of 
the classification.

We first find a range of $\alpha$ which enables us to use it 
for classifier.
For this purpose, we investigate the accuracy and error rate 
for $\alpha = 0.1^p$ with several integers $p$.
Note that $\Phi_\ell (p^k_\ell, \alpha) \ge 1$ for any $\ell = 0, 1, \ldots, 9$
and $p^k_\ell \in A_\ell$, thus no underflow occurs for training data.
Table~\ref{table: classification of training data} presents the 
numerical results of accuracy and error rates of the training data 
for $\alpha = 0.1^p$ with $p=0, 1, \ldots, 7$.
From the viewpoint of empirical risk minimization,
one can find that $\alpha \ge 0.1^5$ is reasonable to choose
a uniform parameter of $\alpha$ for the baseline approach.
\begin{table}[htbp]
 \begin{center}
  \renewcommand{\arraystretch}{1.2}
  \begin{tabular}{|c|c|c|}
   \hline
   $\alpha$ & Correct & Incorrect \\
   \hline
   $1.0$ & 60000 (100.00\%) & 0 (0.00\%) \\
   \hline
   $0.1$ & 60000 (100.00\%) & 0 (0.00\%) \\
   \hline
   $0.1^2$ & 60000 (100.00\%) & 0 (0.00\%) \\
   \hline
   $0.1^3$ & 60000 (100.00\%) & 0 (0.00\%) \\
   \hline
   $0.1^4$ & 60000 (100.00\%) & 0 (0.00\%) \\
   \hline
   $0.1^5$ & 60000 (100.00\%) & 0 (0.00\%) \\
   \hline
   $0.1^6$ & 47942 (79.90\%) & 12058 (20.10\%) \\
   \hline
   $0.1^7$ & 22981 (38.30\%) & 37019 (61.70\%) \\
   \hline
  \end{tabular}
  \caption{Classification results (count / rate) of training data 
  for uniformly fixed $\alpha = 0.1^p$ ($0 \le p \le 7$).
  }
  \label{table: classification of training data}
 \end{center}
\end{table}

We next investigate if the baseline approach is useful 
to classify the test data of handwritten digits provided by MNIST.
Now, let us denote the test data by MNIST as
\begin{equation}
 S = \{ (x^k, y_k) \in [0,255]^d \times \{ 0, 1, \ldots, 9 \} \mid
 k = 1, 2, \ldots, 10000 \}.
 \label{mnist test data}
\end{equation}
For these data, we investigate the accuracy and the rates of
error and no decision for the baseline approach.
Note that ``No decision'' means that $(x^k, y_k)$
results 
$\widehat{\Phi}_\Delta (x^k, \alpha) \le \varepsilon$,
that is, our method cannot answer its label.
Table~\ref{table: classification of test data} presents 
the counts and rates of Correct, Incorrect and No decision
of test data for $\alpha = 0.1^p$ with $p=0, \ldots, 7$.
One can find that 
\begin{enumerate}
 \item There are too much No decision case when $\alpha \ge 0.1^3$.
       \label{much vanish}
 \item On the other hand, there are too much Incorrect case when $\alpha \le 0.1^6$.
       It is consistent of the result for training data.
 \item In summary, $0.1^5 \le \alpha \le 0.1^4$ is reasonable 
       for MNIST database.
       In particular, our approach with $\alpha = 0.1^5$
       results high accuracy of $96.95 \%$ even though
       we do nothing to the test data 
       for prelimiary of the classification. 
       \setcounter{observation}{\value{enumi}}
\end{enumerate}
According to \cite{MNIST}, the baseline algorithm of our approach
establishes better accuracy than $k$-Nearest Neighbor ($k$-NN) classification
with Euclidean norm, nonlinear classifier 
by 40 PCA with quadratic, 
and linear classifier by 1000 radial basis functions approaches.
(See also \cite{LindbladSladoje_2014IEEE, Keysers_etal_2007IEEE}
for $k$-NN; they establishes higher accuracy than our results.)
On the other hand,
observation \eqref{much vanish} indicates that the computation of
$\Phi_{\Delta, \ell} (x^k, \alpha)$ frequently underflows when $\alpha$ is large
(equivalently, when $t$ is small).
Consequently, regardless of how $\alpha$ is chosen, 
a large number of both ''Incorrect'' and ''No decision''
outcomes may occur,
making it difficult to identify an appropriate
value of $\alpha$ for the discriminants.
This motivates the consideration of a pointwise algorithm,
such as Algorithm 2. 
\begin{table}[htbp]
 \begin{center}
 \renewcommand{\arraystretch}{1.2}
  \begin{tabular}{|c|c|c|c|}
   \hline
   $\alpha$ & Correct & Incorrect & No decision \\
   \hline
   $1.0$ & 0 (0.00\%) & 0 (0.00\%) & 10000 (100.00\%) \\
   \hline
   $0.1$ & 0 (0.00\%) & 0 (0.00\%) & 10000 (100.00\%) \\
   \hline
   $0.1^2$ & 8 (0.08\%) & 0 (0.00\%) & 9992 (99.92\%) \\
   \hline
   $0.1^3$ & 2053 (20.53\%) & 4 (0.04\%) & 7943 (79.43\%) \\
   \hline
   $0.1^4$ & 9692 (96.92\%) & 308 (3.08\%) & 0 (0.00\%) \\
   \hline
   $0.1^5$ & 9695 (96.95\%) & 305 (3.05\%) & 0 (0.00\%) \\
   \hline
   $0.1^6$ & 8020 (80.20\%) & 1980 (19.80\%) & 0 (0.00\%) \\
   \hline
   $0.1^7$ & 4031 (40.31\%) & 5969 (59.69\%) & 0 (0.00\%) \\
   \hline
  \end{tabular}
  \caption{Classification results (count / rate) of test data
  for uniformly setting $\alpha = 0.1^p$ ($0 \le p \le 7$).
  }
  \label{table: classification of test data}
 \end{center}
\end{table}

We present more precisely classification results 
on the case $\alpha = 0.1^5$.
Table~\ref{precise data for uniform alpha=0.1^5} presents
the predicted, correct, incorrect counts and rates 
for the each digit $0, 1, \ldots, 9$ in test data. 
Here, for label $\ell \in \{ 0, \ldots, 9 \}$, note that
\begin{itemize}
 \item Total: the number of $(x^k, y_k)$ satisfying $y_k = \ell$,
 \item Predicted: 
       the number of $(x^k, y_k)$ satisfying
       $\widehat{\Phi}_\Delta (x_k, \alpha) > 0$ and
       $\widehat{\Phi}_\Delta (x_k, \alpha) = \Phi_{\Delta, y_k} (x_k, \alpha)$
       (which includes the case $y_k \neq \ell$),
       that is, 
       our method returns the label $\ell$.
\end{itemize}
Note that in Table~\ref{precise data for uniform alpha=0.1^5}, 
the denominaters of the rates in this table uses the number
of ``Total'' for each digit. 
The sum of match and error number is total number of test data 
on label $\ell$.
One can find that our algorithm has a good performance 
for classification of every digit $0, 1, \ldots, 9$.
\begin{table}[htbp]
\begin{center}
 \renewcommand{\arraystretch}{1.2}
 \begin{tabular}{ccccc}
  True Digit & Total & Predicted & Correct & Incorrect \\
  \hline
  0 &  980 &  991 (101.12\%) &  972 (99.18\%) &  8 (0.82\%) \\
  1 & 1135 & 1177 (103.70\%) & 1130 (99.56\%) &  5 (0.44\%) \\
  2 & 1032 & 1012 ( 98.06\%) &  996 (96.51\%) & 36 (3.49\%) \\
  3 & 1010 & 1005 ( 99.51\%) &  970 (96.04\%) & 40 (3.96\%) \\
  4 &  982 &  969 ( 98.68\%) &  944 (96.13\%) & 38 (3.87\%) \\
  5 &  892 &  899 (100.78\%) &  860 (96.41\%) & 32 (3.59\%) \\
  6 &  958 &  967 (100.94\%) &  945 (98.64\%) & 13 (1.36\%) \\
  7 & 1028 & 1029 (100.10\%) &  987 (96.01\%) & 41 (3.99\%) \\
  8 &  974 &  938 ( 96.30\%) &  922 (94.66\%) & 52 (5.34\%) \\
  9 & 1009 & 1013 (100.40\%) &  969 (96.04\%) & 40 (3.96\%) \\
  \hline 
  Total & 10000 & 10000 (100.00\%) & 9695 (96.95\%) & 305 (3.05\%) \\
  \hline
 \end{tabular}	
 \caption{Numerical results of 
 predicted, correct and incorrect (count / rate)  
 for each label $\ell = 0, 1, \ldots, 9$
 by the baseline approach with $\alpha = 0.1^5$.
 }
 \label{precise data for uniform alpha=0.1^5}
\end{center} 
\end{table}

\par
\bigskip
\noindent
\textbf{Pointwise parameter algorithm.}

We next examine Algorithm~\ref{alg1}.
Recall that 
\[
 \widehat{\Phi}_\Delta (x^k, \alpha)
 = \max \{ \Phi_{\Delta, \ell} (x^k, \alpha) \mid
 \ell=0, 1, \ldots, 9 \},
\]
and $\Phi_{\Delta, \ell} (x^k, \alpha)$ are the numerically computed value
of $\widehat{\Phi} (x^k, \alpha)$ and $\Phi_\ell (x^k, \alpha)$,
respectively.
In Algorithm~\ref{alg1}, we find a \emph{pointwise}
appropriate $\alpha$ for each test data $(x^k, y_k) \in S$.
We emphasize that the aim of our algorithm is to compute
the characteristic function of the set $S_i$ defined in \eqref{S_i},
not to compute a function approximating the characteristic function
of $S_i$.
This is the reason why we compute a pointwise $\alpha$
for each $(x^k, y_k) \in S$.

Table~\ref{classification results by alg1}
presents the numerical results of the classifications
by Algorithm~\ref{alg1} with $\varepsilon = \varepsilon_*$
and $r = 0.1$.
One can find that the results by Algorithm~\ref{alg1}
is very close to the baseline approach.
However, the accuracy of total
in Table~\ref{precise data for uniform alpha=0.1^5}
is better than that in Table~\ref{classification results by alg1}.
For improving our method, we next investigate the distribution of $\alpha = 10^{-p}$, 
which is used to classify the handwritten digit
by Algorithm~\ref{alg1}. 
Now, let us set $\alpha = r^{\frac{p}{100}}$ with $r=0.1$
and examine the Algorithm~\ref{alg1} with $\varepsilon = \varepsilon_*$,
and find an optimal $p \in \mathbb{N} \cup \{ 0 \}$ to classify, 
that is,
\[
 \widehat{\Phi}_\Delta (x^k, r^{\frac{p}{100}}) > \varepsilon_*,
 \quad 
 \widehat{\Phi}_\Delta (x^k, r^{\frac{p-1}{100}}) \le \varepsilon_*.
\]
Figure~\ref{powermap: alg1} plots the power $q=p/100$ 
satisfying the above
for each datum $x^k \in S$.
One can find that the all power in Figure~\ref{powermap: alg1}
seems to be less than 4, and thus
the parameter $\alpha = 10^{-5}$ in Table~\ref{precise data for uniform alpha=0.1^5}
is quite smaller than those used in Algorithm~\ref{alg1}.
In a viewpoint of solving the heat equation,
it means that the baseline approach uses the solution of
the heat equation at a longer time than that
for Algorithm~\ref{alg1}.
Thus, 
the results in Table~\ref{precise data for uniform alpha=0.1^5}
can be attributed to the smoothing effect of the heat equation,
which happens to work favorably in this case.
\begin{table}[htbp]
\renewcommand{\arraystretch}{1.2}
 \begin{center}
  \begin{tabular}{cccccc}
   True & & & & & \\
   Digit & Total & Predicted & Correct & Incorrect & Avg. power \\
   \hline
   0 &  980 &  993 (101.33\%) &  973 (99.29\%) &  7 (0.71\%) & 3.931633 \\ 
   1 & 1135 & 1167 (102.82\%) & 1129 (99.47\%) &  6 (0.53\%) & 3.025551 \\
   2 & 1032 & 1008  (97.67\%) &  992 (96.12\%) & 40 (3.88\%) & 3.995155 \\
   3 & 1010 & 1008  (99.80\%) &  970 (96.04\%) & 40 (3.96\%) & 3.988119 \\
   4 &  982 &  971  (98.88\%) &  944 (96.13\%) & 38 (3.87\%) & 3.932790 \\
   5 &  892 &  904 (101.35\%) &  860 (96.41\%) & 32 (3.59\%) & 3.982063 \\
   6 &  958 &  962 (100.42\%) &  944 (98.54\%) & 14 (1.46\%) & 3.868476 \\
   7 & 1028 & 1037 (100.88\%) &  992 (96.50\%) & 36 (3.50\%) & 3.598249 \\
   8 &  974 &  939  (96.41\%) &  921 (94.56\%) & 53 (5.44\%) & 3.990760 \\
   9 & 1009 & 1011 (100.20\%) &  967 (95.84\%) & 42 (4.16\%) & 3.757185 \\
   \hline
   Total & 10000 & 10000 (100.00\%) & 9692 (96.92\%) & 308 (3.08\%) & 3.793500 \\
   \hline
  \end{tabular}
  \caption{Numerical results (count / rate) of classifications by
  Algorithm~\ref{alg1}.
  Average power is the average of $q \in \mathbb{N}$ of $\alpha = 10^{-q}$
  used in the classification of test data.
  }
  \label{classification results by alg1}
 \end{center}
\end{table}

\begin{figure}[htbp]
 \begin{center}
  \includegraphics[scale=0.5]{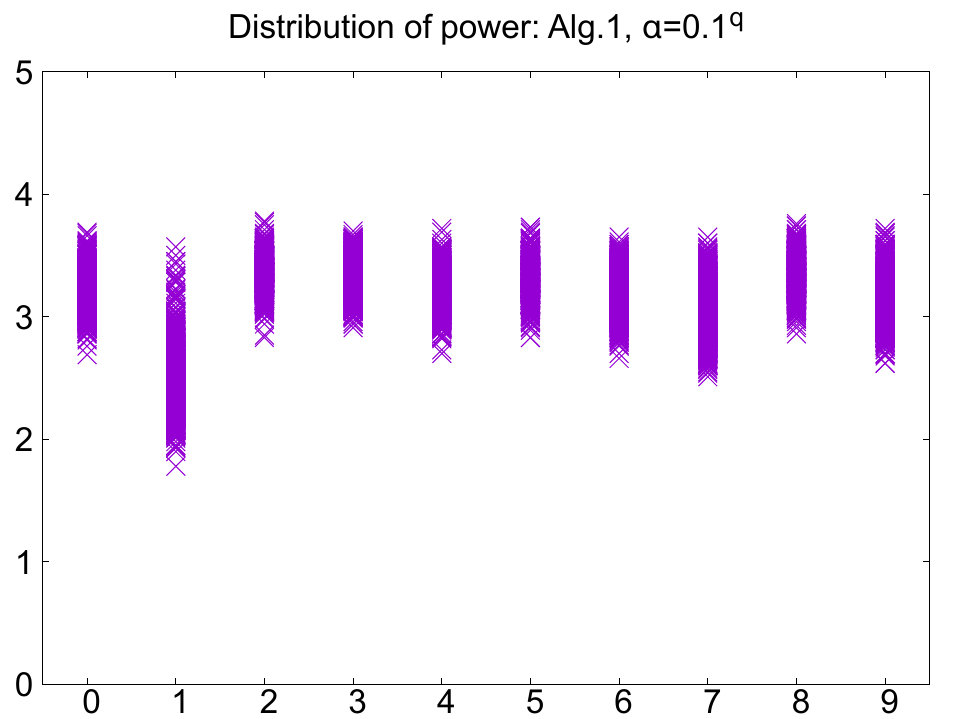}
  \caption{Distribution of the power $q=p/100$ 
  of $\alpha = 10^{-q}$ using the classification 
  by Algorithm~\ref{alg1} with $\varepsilon = \varepsilon_*$.
  }
  \label{powermap: alg1}
 \end{center} 
\end{figure}

For demonstration the accuracy of our method 
we compare it with the accuracy obtained by applying 
the support-vector machine, which is given here. 
By using the support vector machine with the activation function, 
we can classify in the following way: 
Let $\varphi: {\mathbb R}^{10} \to  {\mathbb R}^{10}$ 
be an activation function, a matrix $M \in {\mathbb R}^{d \times 10}$ 
and a bias $b \in {\mathbb R}^{10}$. 
In this formulation $\varphi_\ell$ indicates the propability 
such that the datum $x$ corresponds to $\ell$, 
where $\varphi_\ell$ is a component of $\varphi$, namely, 
$\varphi = (\varphi_0, \varphi_1, \ldots, \varphi_9)$. 
Also, to get suitable $M$ and $b$ we define a loss function $f = f(M, b)$ by 
\[ f(M, b) = \sum_{\ell = 0}^9 \sum_{k=1}^{K_\ell} | \varphi( M p^k_\ell + b) - \ell|^2. 
\] 
After finding a pair $(M_*, b_*)$ such that 
$f(M_*, b_*) = \min \{ f(M, b) \mid M \in {\mathbb R}^{d \times 10}, b \in {\mathbb R}^{10} \}$, 
we classify a testing datum $x \in {\mathbb R}^d$ as $\ell_*$ if
\[ 
 \varphi_{\ell_*} (M_* x + b_*) 
 = \max\{ \varphi_\ell (M_* x + b_*) \mid \ell = 0, 1, \ldots, 9 \}. 
\] 

In this article, we find $(M_*, b_*)$ by the gradient descent 
and investigate the accuracy of the support vector machine 
for each of three types of activation functions, 
the Softmax function 
$\varphi_i (\xi) = \exp (\xi_i) / (\sum_{\ell = 0}^9 \exp(\xi_\ell))$, 
the RELU function 
$\varphi_i (\xi) = \max\{0, \xi_i\}$, and the linear function 
$\varphi_i(\xi) = \xi_i$ for $\xi = (\xi_0, \xi_1, \ldots, \xi_9) \in {\mathbb R}^{10}$ 
and  $i = 0, 1, \ldots, 9$. 
Here, Table \ref{compare: diff vs svm} shows the comparison
between the support vector machine with several activation functions 
and our method.
%
%
\begin{table}[htbp]
\renewcommand{\arraystretch}{1.2}
\begin{tabular}{p{65pt}|ccccc}
\hline
Methods & Baseline approach & Algorithm~\ref{alg1} & Softmax  &  RELU & Linear  \\
\hline
Error rate (\%) & 3.05 & 3.08 & 7.71  & 8.76 & 13.8 \\
\hline \end{tabular}
\caption{Error rates by methods} 
\label{compare: diff vs svm}
\end{table}
%
%

In results shown by Table \ref{compare: diff vs svm} the epoch number is 100 
for the all three ways of (SVM), since the error rates increased 
for the epoch number greater than 100. 
Here, we emphasise that in Algorithm 1 the parameter $\alpha$ is 
determined independently of the labels of the testing data. 
On the other hand, the epoch number must be defined, 
after we know the labels of the test data. 
Thus, the most important advantage of our classification methods is 
to achieve high correct rate without the labels of the testing data. 

We next investigate the accuracy of our approach
for the choice of the parameter $\varepsilon$.
Table~\ref{alg1: comparison of epsilon} shows the counts of correct
of the classifications by Algorithm~\ref{alg1} with
$\varepsilon = \varepsilon_*$, $10^{-200}$, $10^{-100}$, $10^{-50}$,
$10^{-20}$, and $10^{-10}$.
The counts of correct are the almost same when $\varepsilon < 10^{-20}$,
and accuracy of classification become worse when $\varepsilon^{-10}$.
The computational time increases when $\varepsilon$ increases.
Hence, the choice of the machine epsilon $\varepsilon = \varepsilon_*$
is reasonable in this case, although the count of correct 
in our numerical results attains its maximum 
when $\varepsilon = \varepsilon^{-50}$.
\begin{table}[htbp]
\renewcommand{\arraystretch}{1.2}
 \begin{center}
  \begin{tabular}{cc|cccccc}
   \hline
   True & & 
   \multicolumn{6}{c}{Correct (count)}
   \\
   \cline{3-8}
   Digit & Total & $\varepsilon_*$ & $10^{-200}$ 
   & $10^{-100}$ & $10^{-50}$ & $10^{-20}$ & $10^{-10}$ \\
   \hline
   0 &  980 &  973 &  973 &  973 &  972 &  972 &  968 
   \\ 
   1 & 1135 & 1129 & 1129 & 1129 & 1130 & 1130 & 1132 
   \\
   2 & 1032 &  992 &  992 &  992 &  996 &  996 &  959 
   \\
   3 & 1010 &  970 &  970 &  971 &  970 &  970 &  957 
   \\
   4 &  982 &  944 &  944 &  943 &  946 &  944 &  941 
   \\
   5 &  892 &  860 &  860 &  859 &  860 &  860 &  823 
   \\
   6 &  958 &  944 &  944 &  944 &  945 &  945 &  938 
   \\
   7 & 1028 &  992 &  992 &  990 &  989 &  987 &  986 
   \\
   8 &  974 &  921 &  921 &  923 &  922 &  922 &  877 
   \\
   9 & 1009 &  967 &  967 &  968 &  968 &  969 &  964 
   \\
   \hline
   Total & 10000 & 9692 & 9692 & 9692 & 9698 & 9695 & 9545 
   \\
   \hline
  \end{tabular}
  \caption{Comparison of the counts of correct 
  with respect to $\varepsilon$ in Algorithm~\ref{alg1}.
  }
  \label{alg1: comparison of epsilon}
 \end{center}
\end{table}

Moreover, we investigate the relation between 
the number of training data and accuracy of the classification.
In this examinations, we pick up 
just a subset of training data of the form
\[
 A (K) = \{ (\xi^k, \eta_k) \in A \mid k=1, 2, \ldots, K \}
\]
for $K=5000, 10000, 15000, \ldots, 60000$
to make the discreminants $\Phi_{\ell, \triangle} (x, \alpha)$,
$\ell=0, 1, 2, \ldots, 9$.
Table~\ref{relation: tr_num and match} compares 
the counts of correct by the baseline approach and
Algorithm~\ref{alg1} when we make every $\Phi_\ell (x, \alpha)$
with restricted training data $A(K)$.
One can find that our algorithms establish an accuracy of over $93\%$
even if we make the discreminants just by
$5000$ training data.
\begin{table}[htbp]
\renewcommand{\arraystretch}{1.2}
 \begin{center}
  \begin{tabular}{c|c|c}
   & \multicolumn{2}{c}{Correct (count)} \\
   \cline{2-3}
   Num. of training data & baseline approach 
       & Algorithm~\ref{alg1} \\
   ($K=\sum_{i=0}^9 K_i$)
   & ($\alpha = 10^{-5}$) & ($\varepsilon = \varepsilon_*$, $\alpha = 10^{-p}$) \\ 
   \hline
    5000 & 9354 & 9343 \\
   10000 & 9475 & 9463 \\
   15000 & 9539 & 9523 \\
   20000 & 9569 & 9555 \\
   25000 & 9602 & 9592 \\
   30000 & 9622 & 9617 \\
   35000 & 9644 & 9637 \\
   40000 & 9660 & 9653 \\
   45000 & 9666 & 9658 \\
   50000 & 9669 & 9667 \\
   55000 & 9687 & 9682 \\
   60000 & 9695 & 9692 \\
   \hline
  \end{tabular}
  \caption{Relation between number of training data and
  counts of correct for the baseline approach and 
  Algorithm~\ref{alg1}. 
  }
  \label{relation: tr_num and match}
 \end{center}
\end{table}

We also give a numerical result of the classification 
with online learning by Algorithm~\ref{alg3}.
In this examination, 
thanks to discreminants 
$\Phi_\ell (x,\alpha)$ ($\ell = 0, 1, \ldots, 9$)
made by 60000 training data, 
we execute the following for all test data 
$x^k$ ($k=1, 2, \ldots, 10000$):
\begin{enumerate}
 \item Classify the datum $x^k$ by $\Phi_\ell (x, \alpha)$
       to obtaine the classified label $i(x^k)$.
 \item Update the discreminants $\Phi_\ell (x, \alpha)$
       with the obtained label $i(x^k)$ or the correct label $y_k$
       of $x^k$ by the following rule.
       \begin{description}
	\item[with supervisor] Replace $\Phi_{y_k} (x, \alpha)$
		   by $\Phi_{y_k} (x, \alpha) + \exp (\alpha |x - x^k|)$.
	\item[without supervisor] Replace $\Phi_{i(x_k)} (x, \alpha)$
		   by $\Phi_{i(x_k)} (x, \alpha) + \exp (\alpha |x - x^k|)$.
       \end{description}
\end{enumerate}
Table \ref{alg3: numerical result of online learning}
presents the numerical results of the classification by 
Algorithm~\ref{alg3} with or without supervisor.
One can find not only the algorithm with supervisor refines
the classification results by Algorithm~\ref{alg1},
but also the algorithm without supervisor establishes
9686 counts of correct, which is just $6$ less than 
that by Algorithm~\ref{alg1}.
These results indicate that our algorithm is robust to 
learning from incorrectly predicted data.
\begin{table}[htbp]
\renewcommand{\arraystretch}{1.2}
 \begin{center}
  \begin{tabular}{cc|c|c|c}
   & & 
   \multicolumn{3}{c}{Correct (count)}
   \\
   \cline{3-5}
   True & & & 
   \multicolumn{2}{c}{Algorithm~\ref{alg3}} \\
   \cline{4-5}
   Digit & Total & Algorithm~\ref{alg1} & with supervisor & without supervisor \\
   \hline
   0 &  980 &  973 &  972 &  972 
   \\ 
   1 & 1135 & 1129 & 1130 & 1129 
   \\
   2 & 1032 &  992 &  995 &  993 
   \\
   3 & 1010 &  970 &  974 &  971 
   \\
   4 &  982 &  944 &  950 &  945 
   \\
   5 &  892 &  860 &  864 &  861 
   \\
   6 &  958 &  944 &  944 &  944 
   \\
   7 & 1028 &  992 &  995 &  989 
   \\
   8 &  974 &  921 &  925 &  922 
   \\
   9 & 1009 &  967 &  972 &  960 
   \\
   \hline
   Total & 10000 & 9692 & 9721 & 9686 
   \\
   \hline
   \multicolumn{2}{c|}{Avg. power}
   & 3.7935 & 3.7859 & 3.7859 \\
   \hline
  \end{tabular}
  \caption{Comparison of counts of correct
  between Algorithm~\ref{alg1} and 
  Algorithm~\ref{alg3} with or without supervisor.
  In Algorithm~\ref{alg3}, 
  ``with supervisor'' means that the label $i(p_m)$
  of the test data $p_m$ is given as the correct label.
  On the other hand, 
  ``without supervisor'' means that $i(p_m)$
  of the test data $p_m$ is given as the 
  predicted label by Algorithm~\ref{alg3}
  with training data $\bigcup_{j=0}^9 A_j$ and $p_1, \ldots, p_{m-1}$.}
  \label{alg3: numerical result of online learning}
 \end{center}
\end{table}

Finally, we conclude this section presenting
the numerical results of classification with noises.
Let the training data \eqref{mnist training data} 
by MNIST be given as
\[
 \xi^k = (\xi^k_{i,j})_{1 \le i,j \le 28}, \quad
 \xi^k_{i,j} \in [0, 255] \cap \mathbb{Z}
 \quad \mbox{for} \ i,j \in \{ 1, 2, \ldots, 28 \}. 
\]
We now add a uniform random noise of the form
\[
 \zeta^k = (\zeta^k_{i,j})_{1 \le i,j \le 28}, \quad
 \zeta^k_{i,j} \in [-L,L] \cap \mathbb{Z}
\]
with a fixed constant $L \in \mathbb{N}$ to $\xi^k$,
which is of the form
\[
 \tilde{\xi}^k = (\tilde{\xi}^k_{i,j})_{1 \le i,j \le 28}, \quad
 \tilde{\xi}^k_{i,j} = \max \{ 0, \min \{ 255, \xi^k_{i,j} + \zeta^k_{i,j} \} \}.
\]
See Figure \ref{org and noisy} for details of an original datum
and the datum with added noise.
We also prepare test data with noise $\tilde{x}^k$ of \eqref{mnist test data}
as the similar manner.
\begin{figure}[htbp]
 \begin{center}
  \includegraphics[scale=0.4]{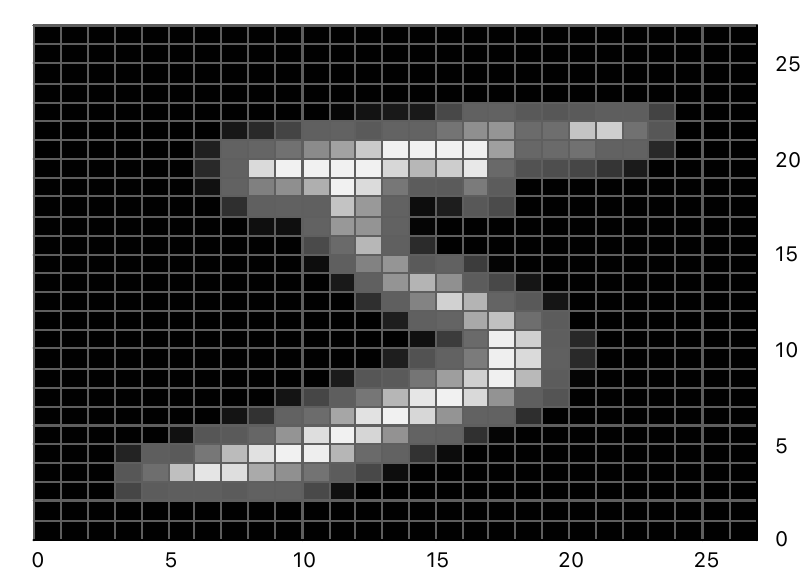}
  \includegraphics[scale=0.4]{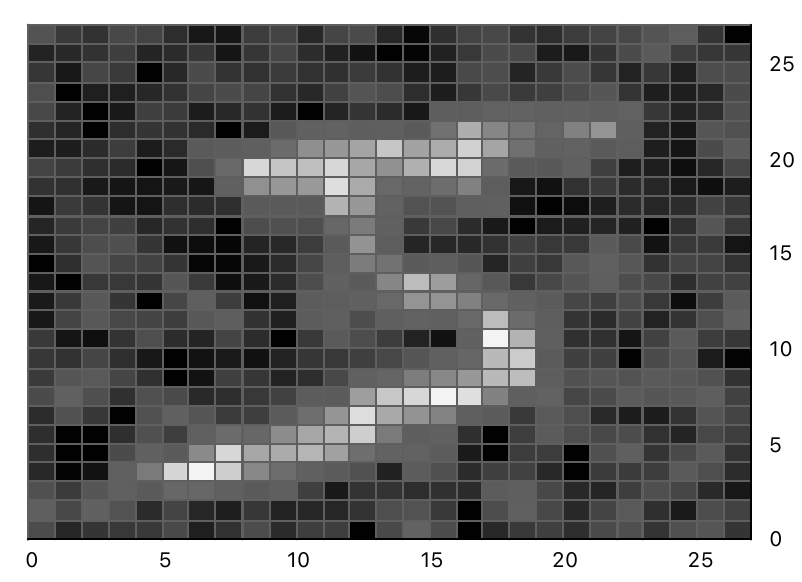}
  \caption{An original datum by MNIST 
  and the datum with added noise ($L=100$).}
  \label{org and noisy}
 \end{center}
\end{figure}
For fixed amplitude $L$ of the random noise,
we execute 10 times of experiments of the classification  
by Algorithm~\ref{alg1}. 
Table \ref{noise test} shows the minimum, maximum,
average and sample variance of counts of correct
of the classification results by the training and test data
with added noise.
Our approach is also robust to 
the classification with the data with added noise.
\begin{table}[htbp]
\renewcommand{\arraystretch}{1.2}
 \begin{center}
  \begin{tabular}[t]{|c|c|c|c|c|}
   \hline
   & \multicolumn{4}{|c|}{Correct (count)} \\
   \cline{2-5} 
   $L$ & Minimum & Maximum & Average & Variance \\
   \hline
   50 & 9650 & 9669 & 9660.7 & 47.79 \\
   60 & 9632 & 9657 & 9647.2 & 52.89 \\
   70 & 9625 & 9652 & 9641.4 & 78.71 \\
   80 & 9607 & 9639 & 9622.4 & 121.60 \\
   90 & 9584 & 9633 & 9604.0 & 253.11 \\
   100 & 9553 & 9596 & 9577.6 & 148.93 \\
   \hline
  \end{tabular}
  \caption{Summary of 10 times classification results by
  the training and test data with added noise.}
  \label{noise test}
 \end{center}
\end{table}

\section{Conclusion and discussion}

We extended the diffusive method proposed by \cite{GGOU:2013CPAA}
to $N$-class data classification.
To handle more than 3 classes classification, we consider 
a solution $(u_1, \ldots, u_N)$ of a system of the heat equations 
whose initial datum is the characteristic function 
$(\chi_{A_1}, \ldots, \chi_{A_N})$ of a given family 
$\{ A_i \mid i=1, \ldots, N \}$ of training data.
By employing the solutions, the classification of test datum $x$
can be established by investigating the diffusive signs 
$\lim_{t \to 0} \mathrm{sgn} (u_i (x, t) - u_j (x, t))$
for $i, j=1, \ldots, N$. 
At the practical stage, we investigate the sign of
$u_i (x, h) - u_j (x, h)$ for $i, j = 1, \ldots, N$
at a small time $h \ll 1$.
By splitting the solution of the heat equation
into those for each label $\ell = 1, \ldots, N$,
our method enables us to consider different diffusion equaitons
for each label $\ell$.
In this paper we just consider the situation such that
the system of heat equations has different diffusion coefficients
for each label $\ell$. 
We obtain a circular classification results characterized
by the essential distance from the training data set $A_i$.
See Figure \ref{fig: result-3-4_triple}
for numerical results comparing the cases where 
the diffusion coefficients are different 
and where they are equal in 3-class classification.

We applied our approach to the classification of hand written digits
proposed by MNIST database \cite{MNIST_Url,MNIST}.
When we apply our method, this database has two issues:
training and test data are discrete, and
the dimension of the data ($28 \times 28$) is too large to compute.
We settled the first issue by using a sum of heat kernels, 
in which the origin is moved to each training data,
instead of solving the heat equations.
However, directly evaluating the heat kernel formula
in such a high-dimensional feature space often leads
to numerical overflow or underflow.
To address this issue, we used the normalized formula
of the heat kernel limiting the numerical issue to underflow only.
As a result, we established a classification algorithm
by finding a pair of the label and the time parameter 
which is the first to emerge from underflow.
We call this idea ``pointwise parameter algorithm'' in this paper.
On the other hand, we investigate the ``baseline approach'',
which finds a uniform time parameter avoiding numerical underflow
for all training data.

In our numerical experiments, we compare the numerical results
of the baseline approach, which uses a single time parameter
derived from the training dataset for all inputs, 
and the pointwise parameter algorithm, which employs 
input-independent time parameters for classification.
Our approaches attained a high accuracy of approximately 97\% 
despite the absence of preprocessing and data augumentation,
indicating the robustness of the method on raw handwritten digit images.
Moreover, it remains robust even when trained on a small
subset of the training data;
for example, using 5000 samples out of the original 60,000,
the classifier still archives over 93\% accuracy.
In addition, it is robust to the training data
that include misclassified samples or noisy data.

\bibliographystyle{plain}
\bibliography{ref_diffsign}
 
\end{document}